\numberwithin{equation}{section}
\title{\bf  Relationship between Maximum Principle and Dynamic Programming Principle for Risk-Sensitive Stochastic Optimal Control Problems with Applications \thanks{This work is financially supported by the National Key R\&D Program of China (2022YFA1006104), National Natural Science Foundations of China (12471419, 12271304), and Shandong Provincial Natural Science Foundations (ZR2024ZD35, ZR2022JQ01).}}
\author{\normalsize Huanqing Dong\thanks{\it School of Mathematics, Shandong University, Jinan 250100, P.R. China, E-mail: donghuanqing@mail.sdu.edu.cn},\quad Jingtao Shi\thanks{\it Corresponding author, School of Mathematics, Shandong University, Jinan 250100, P.R. China, E-mail: shijingtao@sdu.edu.cn}}
\newtheorem{mythm}{Theorem}[section]
\newtheorem{mylem}{Lemma}[section]
\newtheorem{Remark}{Remark}[section]
\begin{document}

\maketitle

\noindent{\bf Abstract:}\quad
This paper is concerned with the relationship between maximum principle and dynamic programming principle for risk-sensitive stochastic optimal control problems. Under the smooth assumption of the value function, relations among the adjoint processes, the generalized Hamiltonian function, and the value function are given. As an application, a linear-quadratic risk-sensitive portfolio optimization problem in the financial market is discussed.

\vspace{2mm}
	
\noindent{\bf Keywords:}\quad Risk-sensitive stochastic optimal control, maximum principle, dynamic programming principle, backward stochastic differential equation
	
\vspace{2mm}
	
\noindent{\bf Mathematics Subject Classification:}\quad 93E20, 60H10, 49N10, 35K15

\section{Introduction}

It is well-known that Pontryagin's \emph{maximum principle} (MP) and Bellman's \emph{dynamic programming principle} (DPP) are two important tools for solving optimal control problems. So a natural question is: Are there any connections between these two methods? The relationship between MP and DPP is studied in many literatures. For the deterministic optimal control problems, the results were discussed by Fleming and Rishel \cite{FR75}, Barron and Jensen \cite{BJ86} and Zhou \cite{Z90-1}. Bensoussan \cite{B82} (see also \cite{YZ99}) obtained the relationship between MP and DPP for stochastic optimal control problems. Zhou \cite{Z90-2} generalized their relationship using the viscosity solution theory (see also \cite{Z91, YZ99}), without the assumption that the value function is smooth.

In this paper, we consider the relationship between MP and DPP for a risk-sensitive stochastic optimal control problem, where the system is given by the following controlled \emph{stochastic differential equation} (SDE):
\begin{equation}\label{state SDE}
\left\{
\begin{aligned}
		dX(t) &= f(t, X(t), u(t)) dt + \sigma(t, X(t), u(t)) dW(t),\\
		X(0) &= x,
\end{aligned}
\right.
\end{equation}
and the cost/objective functional is defined by
\begin{equation}\label{risk-sensitive cost functional}
	J(u(\cdot)) = \mu^{-1}\log \mathbb{E} \left[\exp\left\{\mu\int_0^T l(t, X(t), u(t))  dt+ \mu g(X(T))\right\}\right].
\end{equation}
Risk-sensitive optimal control problems has attracted many researchers since the early work of Jacobson \cite{J73}. On the one hand, this problem can be reduced to the risk-neutral case under the limit of the risk-sensitive parameter. On the other hand, it has close connection with differential games \cite{J92,LZ01}, robust control theory \cite{LZ01,MB17}, and can be applied to describe the risk attitude of an investor by the risk-sensitive parameter in mathematical finance \cite{BP99,FS00,KN02}.

Whittle \cite{W90,W91} first derived Pontryagin's MP based on the theory of large deviation for a risk-sensitive stochastic optimal control problem. A new risk-sensitive maximum principle was established by Lim and Zhou \cite{LZ05} under the condition that the value function is smooth, based on the general stochastic MP of Peng \cite{P90} and the relationship between MP and DPP of Yong and Zhou \cite{YZ99}. This idea is used in Shi and Wu \cite{SW11,SW12} for studying jump processes case and mathematical finance. Djehiche et al. \cite{DTT15} extends the results of \cite{LZ05} to risk-sensitive control problems for dynamics that are non-Markovian and mean-field type. Further developments on risk-sensitive stochastic control problems in a diverse set of settings, see  Moon et al. \cite{MDB19}, Chala \cite{C17-1,C17-2}, Khallout and Chala \cite{KC19}, Moon \cite{M20}, Lin and Shi \cite{LS23,LS25} and the references therein.

In El Karoui and Hamedene \cite{EH03}, the risk-sensitive objective functional is characterized by a {\it backward SDE} (BSDE) with quadratic growth coefficient that does not necessarily satisfy the usual Lipschitz condition. Consider the following BSDE with quadratic growth coefficient:
\begin{equation}\label{BSDE quadratic growth}
\left\{
\begin{aligned}
	dY(t) &= -\left[l(t, X(t), Y(t), Z(t), u(t)) + \frac{\mu}{2}|Z(t)|^2\right]dt +  Z(t)dW(t),\\
	Y(T) &= g(X(T)).
\end{aligned}
\right.
\end{equation}
By the DPP approach, Moon \cite{M21} extended the results of classical risk-sensitive optimal control problem studied in \cite{FS06} of the objective functional given by the BSDE like (1.3). They showed that the value function is the unique viscosity solution of the corresponding Hamilton-Jacobi-Bellman equation under an additional risk-sensitive condition. When $l$ is independent of $y$ and $z$, it is easy to check that $Y(0)=\mu^{-1}\log \mathbb{E} \left[\exp\left\{\mu\int_{0}^{T} l(t, X(t), u(t))  \, dt+ \mu g(X(T))\right\}\right]$. Thus, the risk-sensitive optimal control problem of minimizing $J(u(\cdot))$ in (1.2) subject to (1.1) is equivalent to minimizing $Y(0)$ subject to (1.1) and (1.3). In this case, the controlled system becomes a decoupled \emph{forward-backward SDE} (FBSDE), with the objective functional of a BSDE with quadratic growth coefficient.

For the relationship between MP and DPP for optimal control problems of decoupled FBSDEs/recursice utilities, Shi and Yu \cite{S13} investigated the local case in which the control domain is convex and the value function is smooth. Nie et al. \cite{NSW17} studied the general case by the second-order adjoint equation of Hu \cite{Hu17}, within the framework of viscosity solution. Hu et al. \cite{HJX20} studied the relationship between general MP and DPP for the fully coupled forward-backward stochastic controlled system within the framework of viscosity solutions. Note that all the aforementioned references considered stochastic optimal control problems for BSDE objective functionals with linear growth coefficients satisfying Lipschitz conditions. For the relationship between MP and DPP for optimal control problems of decoupled FBSDEs and BSDE objective functionals with quadratic growth coefficients, very recently, Wu et al. \cite{W25} established the connection between MP and DPP for the risk-sensitive recursive utility singular control problem under the assumption of smooth value function, where the cost functional is given by a BSDE with quadratic growth, driven by a discontinuous semimartingale.

Inspired by the above works, in this paper, we will derive the relationship between MP and DPP for risk-sensitive stochastic optimal control problems. For this problem, because the cost functional is given by \eqref{BSDE quadratic growth}, we wish to connect the MP of Moon et al. \cite{MDB19} under certain conditions. Moreover, since the cost functional (1.2) can be defined by the controlled BSDE with quadratic growth coefficient, we wish to apply the DPP of Moon \cite{M21} under certain conditions. Specifically, when the value function is smooth, we give relations among the adjoint processes, the generalized Hamiltonian function, and the value function.

The rest of this paper is organized as follows. In Section 2, we state our risk-sensitive stochastic optimal control problem and give some preliminaries. In Section 3, we show the relationship between MP and DPP for the problem, by the martingale representation technique. In Section 4, we apply the theoretical results to a portfolio optimization problem in a factor model setting. Specifically, we obtain the state feedback optimal control by the risk-sensitive MP and the DPP, and the relations we obtained are illustrated explicitly. Finally, in Section 5, we give the concluding remarks.

$\mathit{Notations}$. In this paper, we denote by $\mathbb{R}^n$ the space of $n$-dimensional Euclidean space, by $\mathbb{R}^{n \times d}$ the space of $n \times d$ matrices, and by $\mathcal{S}^n $ the space of $n \times n$ symmetric matrices. For $x \in \mathbb{R}^n$ , $x^\top$ denotes its transpose. \textit{I} denotes the identity matrix.  $\langle \cdot , \cdot \rangle$ and $| \cdot |$ denote the scalar product and norm in the Euclidean space, respectively. The trace of a matrix A is $\operatorname{tr}(A)$. Let $C([0,T] \times \mathbb{R}^n)$ be the set of real-valued continuous functions defined on $[0,T] \times \mathbb{R}^n$, $C^{1,k}([0,T] \times \mathbb{R}^n)$ ($k \geq 1$) be the set of real-valued functions such that $f \in C([0,T] \times \mathbb{R}^n)$, $\frac{\partial f}{\partial t}$ and $\frac{\partial^k f}{\partial x^k}$ are continuous and bounded.

\section{Problem statement and preliminaries}

Let $(\Omega, \mathcal{F}, \mathbf{P})$ be a complete probability space equipped with a $d$-dimensional standard Brownian motion $\{W_t\}_{t \geq 0}$. For fixed $t \geq 0$, the natural filtration $\{\mathcal{F}^t_s\}_{s \geq t}$ is generated as $\mathcal{F}_s=\sigma\{W_r-W_t;t \leq r \leq s\} \bigvee \mathcal{N}$, where $\mathcal{N}$ contains all $\mathbf{P}$-null sets in $\mathcal{F}$. In particular, if $t=0$, we write $\mathcal{F}_s=\mathcal{F}^t_s$. For $t\geq0$, we denote by $L^2_{\mathcal{F}}(t,T;\mathbb{R}^n)$ the the space of $\mathbb{R}^n$-valued $\{\mathcal{F}^t_s\}_{s \geq t}$-adapted processes on $[t,T]$ with $\mathbb{E}\int_t^T |x(s)|^2 ds < \infty$, and by $\mathcal{S}(t,T;\mathbb{R}^n)$ the space of $\mathbb{R}^n$-valued $\{\mathcal{F}^t_s\}_{s \geq t}$-adapted processes on $[t,T]$ with $\mathbb{E}\big[\sup_{s \in [t,T]}|x(s)|^2 \big]< \infty$.

Let $T>0$ be finite and let $U \subset \mathbb{R}^m$ is a compact metric space. For any initial time and state $(t,x) \in [0,T) \times \mathbb{R}^n$, consider the state $X^{t,x;u}_\cdot \in \mathbb{R}^n$ given by the following controlled $\mathrm{SDE}$:
\begin{equation}\label{controlled state SDE}
\left\{
\begin{aligned}
	dX^{t, x; u}_s &= f(s, X^{t, x; u}_s, u_s) ds + \sigma(s, X^{t, x; u}_s, u_s) dW_s,\\
	X^{t, x; u}_t &= x.
\end{aligned}
\right.
\end{equation}
Here $f: [0,T] \times \mathbb{R}^n \times U \to \mathbb{R}^n$, $\sigma: [0,T] \times \mathbb{R}^n \times U \to \mathbb{R}^{n \times d}$ are given functions.

Given $t \in [0,T)$, the set of admissible controls are defined as follows:
\begin{equation*}
	\mathcal{U}[t,T]:=\left\{u:[0,T] \times \Omega \to U | u_\cdot \in L^2_{\mathcal{F}}(t,T;U)\right\}.
\end{equation*}
For $u_\cdot \in \mathcal{U}[t,T]$ and $x \in \mathbb{R}^n$, an $\mathbb{R}^n$-valued process $X^{t,x;u}_\cdot$ is called a solution to \eqref{controlled state SDE} if it is an $\{\mathcal{F}^t_s\}_{s \geq t}$-adapted process such that \eqref{controlled state SDE} hold. We refer to such $(u_\cdot,X^{t,x;u}_\cdot)$ as an admissible pair.

We make the following assumption.\\
$(\mathbf{H1})$  $f,\sigma$ are uniformly continuous in $s$, and there exists a constant $C>0$ such that for any $s \in [0,T]$, $x,\hat{x} \in \mathbb{R}^n,u,\hat{u} \in U$,
\begin{equation}\label{assumption H1}
	\begin{cases}
	|f(s,x,u)-f(s,\hat{x},\hat{u})| + |\sigma(s,x,u)-\sigma(s,\hat{x},\hat{u})| \leq C(|x-\hat{x}|+|u-\hat{u}|),\\
	|f(s,0,u)|\leq C.
	\end{cases}
\end{equation}
For any $u_\cdot \in \mathcal{U}[t,T]$, under (H1), the equation \eqref{controlled state SDE} has a unique solution $X^{t, x; u}_\cdot \in \mathcal{S}(t,T;\mathbb{R}^n)$ by the classical $\mathrm{SDE}$ theory.
We consider the following cost functional with initial condition  $(t,x) \in [0,T) \times \mathbb{R}^n$ and $u_\cdot \in \mathcal{U}[t,T]$:
\begin{equation}\label{risk-sensitive cost functional 2}
	J(t,x;u_\cdot ) = \mu^{-1}\log \mathbb{E} \left[\exp\left\{\mu\int_t^T l(s,X^{t, x; u}_s,u_s) ds+ \mu g(X^{t, x; u}_T)\right\}\right],
\end{equation}
where  $l: [0,T] \times \mathbb{R}^n \times U \to \mathbb{R}$, $g: [0,T] \times \mathbb{R}^n \to \mathbb{R}$, and $\mu>0$ is the risk-sensitive parameter that captures robustness (Fleming and Soner \cite{FS06}). Let $J_1 \equiv \int_0^T l(s,X_s^{0, x; u},u_s) ds+ g(X_T^{0, x; u})$. When the risk-sensitive parameter $\mu$ is small, the cost functional \eqref{risk-sensitive cost functional 2} can be expanded as
\begin{equation*}
	\mathbb{E}[J_1] + \frac{\mu}{2}\operatorname{Var}(J_1) + O(\mu^2),
\end{equation*}
where $\operatorname{Var}(J_1)$ denote the variance of $J_1$. If $\mu<0$, which implies that the controller is risk-seeking from an economic point of view.  When $\mu \to 0$, it is reduced to risk-neutral case.

Our risk-sensitive stochastic optimal control problem is the following.

{\bf Problem (SR)}. For given $(t,x) \in [0,T) \times \mathbb{R}^n$, to minimize \eqref{risk-sensitive cost functional 2} subject to \eqref{controlled state SDE} over $\mathcal{U}[t,T]$.

From \cite{M21} (see also \cite{EH03}), we note that minimizing \eqref{risk-sensitive cost functional 2} in Problem (SR) is equivalent to minimizing the following cost functional
\begin{equation}\label{recursive utility}
	J(t,x;u_\cdot) = Y^{t,x;u}_{t},
\end{equation}
where $Y_\cdot^{t,x;u}$ is the first component of the solution pair $(Y^{t,x;u}_\cdot,Z^{t,x;u}_\cdot)$ to the scalar-valued BSDE:
\begin{equation}\label{scalar-valued BSDE}
\left\{
\begin{aligned}
	dY^{t, x; u}_s &= -\left[l(s, X^{t, x; u}_s, u_s) + \frac{\mu}{2}|Z^{t,x;u}_s|^2\right]ds +  Z^{t,x;u}_sdW_s,\\
	Y^{t, x; u}_T &= g(X^{t, x; u}_T).
\end{aligned}
\right.
\end{equation}
In fact, let $\widehat{Y}^{t,x;u}_s:=\exp\big\{\mu Y^{t, x; u}_{s}\big\}$, and note that $Y^{t, x; u}_s = \mu^{-1}\log\widehat{Y}^{t,x;u}_s$. Applying It\^o's formula to $\widehat{Y}^{t,x;u}_\cdot$, we have
\begin{equation*}
\left\{
\begin{aligned}
	d\widehat{Y}^{t,x;u}_s &= \mu \widehat{Y}^{t,x;u}_s\left[-l(s, X^{t, x; u}_s, u_s)ds +  Z^{t,x;u}_sdW_s\right],\\
	\widehat{Y}^{t,x;u}_T &= \exp\big\{\mu g(X^{t, x; u}_T)\big\}.
\end{aligned}
\right.
\end{equation*}
This is a linear BSDE, its explicit solution can be obtained:
\begin{equation*}
		\widehat{Y}^{t,x;u}_s= \mathbb{E} \left[\exp\left\{\mu\int_t^T l(s,X^{t, x; u}_s,u_s) ds+ \mu g(X^{t, x; u}_T)\right\}|\mathcal{F}_t\right].
\end{equation*}
Hence
\begin{equation}\label{explicit solution of linear BSDE}
\begin{aligned}
	J(0,x;u_\cdot) &= Y_0^{0,x;u} = \mu^{-1}\log\widehat{Y}_0^{0,x;u}\\
	           &= \mu^{-1}\log\mathbb{E} \left[\exp\left\{\mu\int_0^T l(s,X_s^{0, x; u},u_s) ds+ \mu g(X_T^{0, x; u})\right\}\right].\\
\end{aligned}
\end{equation}
Note that \eqref{explicit solution of linear BSDE} corresponds to the classical risk-sensitive optimal control problem (see \cite{FS06}, \cite{MDB19}).

We introduce the following assumption.\\
$(\mathbf{H2})$ $l$ is uniformly continuous in $s$, $g$ is uniformly continuous in $x$ and bounded. For $\phi=l,g$, there exists a constant $L>0$, such that for any $t \in [0,T], x,\hat{x} \in \mathbb{R}^n$, and $u,\hat{u} \in U$,
\begin{equation}\label{assumption H2}
\begin{cases}
	|\phi(s,x,u)-\phi(s,\hat{x},\hat{u})| \leq L(|x-\hat{x}|+|u-\hat{u}|),\\
	|\phi(s,0,u)|\leq L.
\end{cases}
\end{equation}

Note that (H2) implies $|l(s, x, u)+\frac{\mu}{2}|z|^2| \leq L(1 + |x| + |z|^2)$.
Thus \eqref{scalar-valued BSDE} can be viewed as a controlled BSDE with quadratic growth coefficient studied in \cite{K00,EH03,Z17}. For any $u_\cdot \in \mathcal{U}[t,T]$ and given unique solution $X_\cdot^{t,x,u}$ to (2.1), under (H2), BSDE \eqref{scalar-valued BSDE} admits a unique solution $(Y_\cdot^{t,x,u},Z_\cdot^{t,x,u}) \in \mathcal{S}(t,T;\mathbb{R}) \times L^2_{\mathcal{F}}(t,T;\mathbb{R}^{1 \times d})$ (see \cite{M21,EH03,Z17}).

We define the value function as
\begin{equation}\label{value function}
	V(t,x):=\inf \limits_{u_{\cdot} \in\, \mathcal{U}[t,T]}J(t,x;u_{\cdot}),\quad(t,x) \in [0,T) \times \mathbb{R}^n.
\end{equation}
Any $\bar{u}_\cdot \in \mathcal{U}[t,T]$ satisfied \eqref{value function} is called an optimal control, and the corresponding solution $\bar{X}_\cdot^{t,x;\bar{u}}$ to \eqref{controlled state SDE} is called optimal state trajectory.

\begin{Remark}\label{V is meaningful}
From \cite{M21}, we know that under (H1) and (H2), the above value function is a deterministic function, so our definition \eqref{value function} is meaningful.
\end{Remark}

In the following, we introduce the risk-sensitive DPP and MP approaches for $\mathbf{Problem(SR)}$ in the literatures, respectively. Moon \cite{M21} studied a generalized risk-sensitive optimal control problem, where the objective functional is defined by a controlled BSDE with quadratic growth coefficient. A generalized risk-sensitive DPP was obtained. And the corresponding value function is a viscosity solution to the corresponding Hamilton-Jacobi-Bellman equation. Under some additional parameter conditions, the viscosity solution is unique. When $l$ is independent of $y$ and $z$ in (3) of Moon \cite{M21}, the BSDE in (3) is reduced to the BSDE in \eqref{scalar-valued BSDE}. Thus we can state the HJB equation for our Problem (SR):
\begin{equation}\label{HJB equation}
\begin{cases}
	\frac{\partial V}{\partial t}(t,x) + \inf\limits_{u \in U} G\left(t,x,u,\frac{\partial V}{\partial x}(t,x),\frac{\partial^2 V}{\partial x^2}(t,x)\right)=0,\quad(t,x) \in [0,T) \times \mathbb{R}^n,\\
	V(T,x)=g(x),\quad x \in \mathbb{R}^n,
\end{cases}
\end{equation}
where the generalized Hamiltonian function $G:[0,T] \times \mathbb{R}^n \times U \times \mathbb{R}^n \times \mathcal{S}^n \to \mathbb{R}$ is defined by
\begin{equation}\label{generalized Hamiltonian}
\begin{aligned}
	G(t,x,u,p,P)&:= l(t,x,u) + \langle p ,f(t,x,u) \rangle  + \frac{\mu}{2}\big|\sigma(t,x,u)^\top p\big|^2 \\
	&\quad+ \frac{1}{2}\operatorname{tr}\big(\sigma(t,x,u)\sigma(t,x,u)^\top P\big).	
\end{aligned}	
\end{equation}

The following result can be inferred by their Theorem 2 of \cite{M21}.
\begin{mylem}\label{V is solution to HJB equation}
Let (H1), (H2) hold and $(t,x) \in [0,T) \times \mathbb{R}^n$. Suppose $V(\cdot,\cdot) \in C^{1,2}([0,T] \times \mathbb{R}^n)$. Then $V(\cdot,\cdot)$ is a solution to the HJB equation in \eqref{HJB equation}.
\end{mylem}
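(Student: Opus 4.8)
The quickest route is to deduce the statement from the viscosity-solution characterization already available in the literature. Specializing the control problem of Moon [M21] by taking the driver of its backward equation independent of $(y,z)$ turns its BSDE into \eqref{scalar-valued BSDE} and, as noted before the statement, turns its generalized Hamiltonian into exactly the $G$ of \eqref{generalized Hamiltonian}; one checks that (H1)--(H2) furnish the structural conditions needed for the \emph{existence} half of Theorem 2 of [M21] (the additional parameter condition there is used only for uniqueness, which we do not need). Theorem 2 of [M21] then gives that $V$ is a viscosity solution of \eqref{HJB equation}. Since by hypothesis $V\in C^{1,2}([0,T]\times\mathbb{R}^n)$, the plan is to invoke the standard fact that a smooth viscosity solution is a classical one: using $V$ itself as a test function in the sub- and supersolution inequalities forces both to hold with the genuine derivatives $\frac{\partial V}{\partial t},\frac{\partial V}{\partial x},\frac{\partial^2V}{\partial x^2}$, and together they give the equality in \eqref{HJB equation}. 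The terminal condition $V(T,x)=g(x)$ is immediate from \eqref{recursive utility}--\eqref{scalar-valued BSDE}.

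If instead a self-contained derivation is wanted, I would argue directly from the dynamic programming principle for the backward semigroup of \eqref{scalar-valued BSDE}. Denote by $G^{u}_{t,t+h}[\xi]$ the time-$t$ value of the solution of \eqref{scalar-valued BSDE} run on $[t,t+h]$ with terminal datum $\xi$; the DPP reads $V(t,x)=\inf_{u}G^{u}_{t,t+h}\big[V(t+h,X^{t,x;u}_{t+h})\big]$. Fix a constant control $u\in U$, let $(Y,Z)$ solve \eqref{scalar-valued BSDE} on $[t,t+h]$ with terminal value $V(t+h,X^{t,x;u}_{t+h})$, and set $Y^1_s:=Y_s-V(s,X_s)$, $Z^1_s:=Z_s-\tilde Z_s$ with $\tilde Z_s:=\sigma(s,X_s,u)^\top\frac{\partial V}{\partial x}(s,X_s)$, where $X_s:=X^{t,x;u}_s$. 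Applying It\^o's formula to $V(s,X_s)$ and using \eqref{scalar-valued BSDE}, the pair $(Y^1,Z^1)$ solves a BSDE with zero terminal value whose driver equals $\frac{\partial V}{\partial t}(s,X_s)+G\big(s,X_s,u,\frac{\partial V}{\partial x},\frac{\partial^2V}{\partial x^2}\big)+\frac{\mu}{2}|Z^1_s|^2+\mu\langle Z^1_s,\tilde Z_s\rangle$. A Girsanov change of measure removing the linear term $\mu\langle Z^1_s,\tilde Z_s\rangle$ then yields, under the new measure with expectation $\tilde{\mathbb E}$, the representation $Y^1_t=\tilde{\mathbb E}\big[\int_t^{t+h}\big(\frac{\partial V}{\partial t}+G+\frac{\mu}{2}|Z^1_s|^2\big)\,ds\big]$.

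From here the two inequalities of \eqref{HJB equation} follow by dividing by $h$ and letting $h\to0$. Choosing an $\varepsilon h$-near-optimal control gives $Y^1_t\le\varepsilon h$, and since the term $\frac{\mu}{2}|Z^1_s|^2\ge0$ may be discarded one obtains $\frac1h\tilde{\mathbb E}\int_t^{t+h}\big(\frac{\partial V}{\partial t}+\inf_{v}G(s,X_s,v,\cdots)\big)ds\le\varepsilon$, whence $\frac{\partial V}{\partial t}(t,x)+\inf_vG(t,x,v,\cdots)\le0$ in the limit, using continuity of $(s,x)\mapsto\inf_vG$ (here compactness of $U$ is essential). Conversely, the DPP with the fixed constant $u$ gives $Y^1_t\ge0$, so $\frac1h Y^1_t\ge0$; since the quadratic contribution vanishes in the limit (see below), $\frac1h Y^1_t\to\frac{\partial V}{\partial t}(t,x)+G(t,x,u,\cdots)$, forcing this to be $\ge0$ for every $u$, hence $\frac{\partial V}{\partial t}+\inf_uG\ge0$. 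Combining the two gives \eqref{HJB equation}, and the terminal condition is as above.

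The step I expect to be the real obstacle --- invisible in the viscosity route because it is absorbed into Moon's theorem --- is the estimate $\frac1h\tilde{\mathbb E}\int_t^{t+h}\frac{\mu}{2}|Z^1_s|^2\,ds\to0$ needed for the ``$\ge$'' inequality above. This rests on showing that, as the horizon shrinks, the martingale integrand $Z_s$ of \eqref{scalar-valued BSDE} matches $\tilde Z_s=\sigma^\top\frac{\partial V}{\partial x}$ to leading order so that $\int_t^{t+h}|Z^1_s|^2\,ds$ is $o(h)$; for a quadratic-growth BSDE this requires the BMO estimates for the $Z$-component together with the regularity of $V$, and the same BMO bound is what makes the Girsanov density a true martingale and legitimizes the representation formula. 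Controlling the $u$-dependence of the measure change uniformly, so that the limits above are uniform in the control, is the accompanying technical point.
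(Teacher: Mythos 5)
Your first paragraph is precisely the paper's argument: the paper offers no proof of this lemma beyond the remark that it ``can be inferred by Theorem 2 of [M21]'', i.e., Moon's theorem (specialized to a driver independent of $(y,z)$) gives that $V$ is a viscosity solution of \eqref{HJB equation}, and the assumed $C^{1,2}$ regularity upgrades this to a classical solution by using $V$ itself as test function, with the terminal condition immediate from \eqref{recursive utility}--\eqref{scalar-valued BSDE}; this route is correct and complete. Your second, self-contained derivation is extra work the paper does not attempt, and the step you honestly flag --- showing $\frac1h\tilde{\mathbb E}\int_t^{t+h}|Z^1_s|^2\,ds\to0$, for which standard quadratic-BSDE a priori estimates only yield $O(h)$ rather than $o(h)$ (one would sooner pass to the exponential transform $e^{\mu Y}$ to linearize away the quadratic term) --- is indeed the genuinely delicate point, but nothing in the lemma depends on it since the first route already suffices.
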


Moon et al. \cite{MDB19} studied a two-player risk-sensitive zero-sum differential game, where the drift and diffusion terms in the controlled state SDE are dependent on the state and controls of both players, and the objective functional is the risk-sensitive type. A stochastic MP type necessary condition was established (their Theorem 1 in \cite{MDB19}). If we consider single-player case, that is, let $f$ is independent of $v$ in (1) of \cite{MDB19}, so that the state equation is consistent with \eqref{controlled state SDE}. Moreover, let $\frac{1}{\delta} = \mu$ and $l$ is independent of $v$ in (4) of \cite{MDB19} so that the cost functional is consistent with \eqref{risk-sensitive cost functional 2}. Thus we can obtain the MP of our Problem (SR).

First, we need introduce the following assumption.\\
$(\mathbf{H3})$ $f,\sigma,l,g$ are twice continuously differentiable with respect to $x$. Their first and second-order partial derivatives in $x$ are bounded, Lipschitz continuous in $(x,u)$ with a constant $L$, and bounded by $L(1 + |x| + |u|)$.

Let $\bar{u}_\cdot$ be an optimal control, and $\bar{X}_\cdot^{t,x;\bar{u}}$ is corresponding optimal state trajectory. For any $s \in [0,T]$, we denote
\begin{equation*}
	\bar{f}(s):=f(s,\bar{X}_s^{t,x;\bar{u}},\bar{u}_s),\quad\bar{\sigma}(s):=\sigma(s,\bar{X}_s^{t,x;\bar{u}},\bar{u}_s),\quad\bar{l}(s) := l(s,\bar{X}_s^{t,x;\bar{u}},\bar{u}_s),
\end{equation*}
and similar notations used for all their derivatives. We also introduce the following notations:
\begin{equation*}
	\sigma(s,x,u)=(\sigma_1(s,x,u),\cdots,\sigma_d(s,x,u)),
\end{equation*}
where $\sigma_j : [0,T] \times \mathbb{R}^n \times U \to \mathbb{R}^n$ for $j = 1,\cdots,d$. Let $\frac{\partial \sigma_j}{\partial x}$ be the partial derivative of $\sigma_j$ with respect to $x$, which is an $n \times n$ dimensional matrix.

For given $(t,x) \in [0,T) \times \mathbb{R}^n$, since the value function is smooth, we introduce the following first-order adjoint process $(p_\cdot,q_\cdot)\equiv(p_\cdot,q_{1\cdot},\cdots,q_{d\cdot}) \in  L^2_{\mathcal{F}}(t,T;\mathbb{R}^n) \times L^2_{\mathcal{F}}(t,T;\mathbb{R}^n,\cdots,\mathbb{R}^n)$ satisfying
\begin{equation}\label{first-order adjoint equation}
\left\{
\begin{aligned}
	dp_s &= -\bigg[ \frac{\partial \bar f}{\partial x}(s)^\top p_s + \frac{\partial \bar l}{\partial x}(s) + \sum_{j=1}^d \frac{\partial \bar \sigma_j}{\partial x}(s)^\top q_{js}
	+ \mu\sum_{j=1}^d \bar{\sigma}_j(s)^\top p_s\frac{\partial \bar \sigma_j}{\partial x}(s)^\top p_s  \\
	&\qquad + \mu\sum_{j=1}^d q_{js}\bar{\sigma}_j(s)^\top p_s \bigg]ds + q_sdW_s, \\
	p_T &= \frac{\partial \bar g}{\partial x}(\bar{X}_T^{t,x;\bar{u}}),
\end{aligned}
\right.
\end{equation}
and the second-order adjoint process $(P_\cdot,Q_\cdot)\equiv(P_\cdot,Q_{1\cdot},\cdots,Q_{d\cdot}) \in L^2_{\mathcal{F}}(t,T;\mathcal{S}^n) \times L^2_{\mathcal{F}}(t,T;\mathcal{S}^n,\\\cdots,\mathcal{S}^n)$ satisfying
\begin{equation}\label{second-order adjoint equation}
\left\{
\begin{aligned}
	dP_s &= -\bigg[ \frac{\partial \bar f}{\partial x}(s)^\top P_s + P_s\frac{\partial \bar f}{\partial x}(s)
    + \sum_{j=1}^d \frac{\partial \bar \sigma_j}{\partial x}(s)^\top \left(P_s + \mu p_sp_s^\top\right)\frac{\partial \bar \sigma_j}{\partial x}(s)\\
	&\quad + \sum_{j=1}^d \frac{\partial \bar \sigma_j}{\partial x}(s)^\top\left(Q_{js} + \mu \bar{\sigma}_j(s)^\top p_sP_s+\mu p_sq_{js}^\top\right) \\
	&\quad + \sum_{j=1}^d \left(Q_{js} + \mu \bar{\sigma}_{j}(s)^\top p_sP_s+\mu q_{js}p_s^\top\right)\frac{\partial \bar \sigma_j}{\partial x}(s) \\
	&\quad + \mu\sum_{j=1}^d q_{js}q_{js}^\top + \frac{\partial^2 \bar H}{\partial x^2}(s) + \mu\sum_{j=1}^d \bar{\sigma}_{js}^\top p_sQ_{js} \bigg]ds
	+ \sum_{j=1}^dQ_{js}dW_{js}, \\
	P_T &= \frac{\partial^2 g}{\partial x^2}(\bar{X}_{T}^{t,x;\bar{u}}),
\end{aligned}
\right.
\end{equation}
where $\bar{H}(s):= H(s,\bar{X}_s^{t,x;\bar{u}},\bar{u}_s,p_s,q_s)$ with the Hamiltonian function $H$ defined by
\begin{equation}\label{risk-sensitive Hamiltonian}
\begin{aligned}
	H(s,x,u,p,q)&:= \langle p ,f(s,x,u) \rangle + l(s,x,u) + \sum_{j=1}^dq_j^\top\sigma_j(s,x,u) \\
	&\quad + \mu \sum_{j=1}^d\sigma_j(s,x,u)^\top p\bar{\sigma}_j(s)^\top p.
\end{aligned}	
\end{equation}

Then, we have the following MP for our Problem (SR).
\begin{mylem}\label{risk-sensitive MP}
Suppose that (H1)-(H3) hold and $(t,x) \in [0,T) \times \mathbb{R}^n$ be fixed. Let $(\bar{u}_\cdot,\bar{X}_\cdot^{t,x;\bar{u}})$ be an optimal pair for our Problem (SR). Then, there exist unique solutions
\begin{equation*}
\left\{
\begin{aligned}
	&(p_\cdot,q_{1\cdot},\cdots,q_{d\cdot}) \in  L^2_{\mathcal{F}}(t,T;\mathbb{R}^n) \times L^2_{\mathcal{F}}(t,T;\mathbb{R}^n,\cdots,\mathbb{R}^n),\\
	&(P_\cdot,Q_{1\cdot},\cdots,Q_{d\cdot}) \in L^2_{\mathcal{F}}(t,T;\mathcal{S}^n) \times L^2_{\mathcal{F}}(t,T;\mathcal{S}^n,\cdots,\mathcal{S}^n),
\end{aligned}
\right.
\end{equation*}
to the first-order adjoint equation \eqref{first-order adjoint equation} and the second-order adjoint equation \eqref{second-order adjoint equation}, respectively, such that the following variational inequality hold:
\begin{equation}\label{maximum condition-H}
\begin{aligned}
	&H(s,\bar{X}_s^{t,x;\bar{u}},u,p_s,q_s)-H(s,\bar{X}_s^{t,x;\bar{u}},\bar{u}_s,p_s,q_s) \\
	&+ \frac{1}{2}\operatorname{tr}\left[\left(\sigma(s,\bar{X}_s^{t,x;\bar{u}},u)-\bar{\sigma}(s)\right)^\top\left(P_s + \mu p_sp_s^\top\right)\left(\sigma(s,\bar{X}_s^{t,x;\bar{u}},u)-\bar{\sigma}(s)\right)\right] \geq 0,\\
	&\hspace{6cm}\forall u \in U, \quad \text{a.e.} s \in [t,T],\quad \mathbf{P}\text{-a.s.},
\end{aligned}	
\end{equation}
or, equivalently,
\begin{equation}\label{maximum condition-mathcal H}
	\mathcal{H}(s,\bar{X}_s^{t,x;\bar{u}},\bar{u}_s) \leq \mathcal{H}(s,\bar{X}_s^{t,x;\bar{u}},u),\quad \forall u \in U, \quad \text{a.e.} s \in [t,T],\quad \mathbf{P}\text{-a.s.},
\end{equation}
where
\begin{equation}\label{mathcal H}
\begin{aligned}
	\mathcal{H}(s,x,u)&:= H(s,x,u,p_s,q_s) - \frac{1}{2}\operatorname{tr}\left[\bar{\sigma}(s)^\top\left(P_s + \mu p_sp_s^\top\right)\bar{\sigma}(s)\right]\\
	&\quad+\frac{1}{2}\operatorname{tr}\left[\left(\sigma(s,x,u)-\bar{\sigma}(s)\right)^\top\left(P_s + \mu p_sp_s^\top\right)\left(\sigma(s,x,u)-\bar{\sigma}(s)\right)\right].
\end{aligned}	
\end{equation}
\end{mylem}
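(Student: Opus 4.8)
The plan is to obtain Lemma~\ref{risk-sensitive MP} as a direct specialization of the general risk-sensitive stochastic maximum principle established in Theorem 1 of Moon et al. \cite{MDB19}, rather than re-deriving it from scratch. First I would set up the precise correspondence between their two-player zero-sum differential game and our single-player Problem (SR): taking the drift $f$ and running cost $l$ in \cite{MDB19} to be independent of the adversary's control $v$ makes their state equation and objective coincide with \eqref{controlled state SDE} and \eqref{risk-sensitive cost functional 2}, and identifying their risk-sensitive parameter via $\frac{1}{\delta}=\mu$ aligns the exponential weighting. Under this identification I would verify that Assumptions (H1)--(H3) are precisely what is needed to invoke their theorem: (H1) yields well-posedness of the forward SDE, (H2) provides the growth and Lipschitz control on $l,g$ ensuring the risk-sensitive functional is finite, and (H3) supplies the twice-differentiability and polynomial-growth bounds on all coefficients required for the second-order expansion.

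Next I would read off the adjoint systems. Because the cost is of exponential (risk-sensitive) type, the logarithmic transformation produces the extra quadratic couplings $\mu\sum_{j=1}^d \bar{\sigma}_j(s)^\top p_s\frac{\partial \bar{\sigma}_j}{\partial x}(s)^\top p_s$ and $\mu\sum_{j=1}^d q_{js}\bar{\sigma}_j(s)^\top p_s$ in the first-order equation \eqref{first-order adjoint equation}, together with the analogous $\mu$-weighted terms in the second-order equation \eqref{second-order adjoint equation}. I would check term-by-term that specializing the adjoint equations of \cite{MDB19} — dropping the $v$-dependence and substituting $\mu$ for $1/\delta$ — reproduces \eqref{first-order adjoint equation} and \eqref{second-order adjoint equation}, and that the Hamiltonian $H$ in \eqref{risk-sensitive Hamiltonian}, including its quadratic term $\mu \sum_{j=1}^d\sigma_j(s,x,u)^\top p\,\bar{\sigma}_j(s)^\top p$, matches their generalized Hamiltonian. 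Well-posedness of these linear (respectively matrix-valued) BSDEs with the stated integrability then follows from their result, yielding the asserted unique solutions. The variational inequality \eqref{maximum condition-H} is likewise the single-player restriction of their maximum condition, where I would attend to the direction of the inequality, since our minimizing player corresponds to one side of the saddle-point condition in the zero-sum game.

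Finally I would establish the equivalence of \eqref{maximum condition-H} and \eqref{maximum condition-mathcal H}, which is a purely algebraic rearrangement. Writing out $\mathcal{H}(s,\bar{X}_s^{t,x;\bar{u}},u)-\mathcal{H}(s,\bar{X}_s^{t,x;\bar{u}},\bar{u}_s)$ from the definition \eqref{mathcal H}, the common correction term $-\frac{1}{2}\operatorname{tr}[\bar{\sigma}(s)^\top(P_s+\mu p_sp_s^\top)\bar{\sigma}(s)]$ cancels, and since $\sigma(s,\bar{X}_s^{t,x;\bar{u}},\bar{u}_s)=\bar{\sigma}(s)$ the quadratic correction vanishes at $u=\bar{u}_s$; what remains is exactly the left-hand side of \eqref{maximum condition-H}. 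Hence nonnegativity of the latter is equivalent to the minimality statement \eqref{maximum condition-mathcal H}.

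The main obstacle will be the bookkeeping in the reduction step — matching notation, sign conventions, and the precise form of the extra $\mu$-terms between the two-player game of \cite{MDB19} and our single-player formulation — rather than any new analytic difficulty, since the hard stochastic analysis (spike variation, the first- and second-order variational equations, and the duality relations) is already carried out there.
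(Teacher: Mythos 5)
Your proposal matches the paper's approach exactly: the paper gives no independent proof of Lemma~\ref{risk-sensitive MP}, but obtains it precisely as you describe, by specializing Theorem 1 of Moon et al.\ \cite{MDB19} to the single-player case (taking $f$ and $l$ independent of the adversary's control $v$ and setting $\frac{1}{\delta}=\mu$), with the equivalence of \eqref{maximum condition-H} and \eqref{maximum condition-mathcal H} being the routine algebraic cancellation you note. Your write-up is, if anything, more explicit than the paper about the verification of (H1)--(H3) and the term-by-term matching of the adjoint equations.
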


\section{Main result}

In this section, under the assumption that the value function is smooth, we investigate the relationship between the MP and DPP for $\mathbf{Problem(SR)}$. Specifically, we obtain the relationship among the value function $V$, the generalized Hamiltionian function $G$ and the adjoint processes. Our main result is the following.
\begin{mythm}\label{relation of MP and DPP}
Let (H1)-(H3) hold, and  $(t,x) \in [0,T) \times \mathbb{R}^n$ be fixed. Suppose that $\bar{u}_\cdot$ is an optimal control for Problem (SR), and $(\bar{X}_\cdot^{t,x;\bar{u}},\bar{Y}_\cdot^{t,x;\bar{u}},\bar{Z}_\cdot^{t,x;\bar{u}}) \in \mathcal{S}(t,T;\mathbb{R}^n) \times \mathcal{S}(t,T;\mathbb{R}) \times L^2_{\mathcal{F}}(t,T;\mathbb{R}^{1 \times d})$ is the corresponding optimal trajectory. If $V(\cdot,\cdot) \in C^{1,2}([0,T] \times \mathbb{R}^n)$, then
\begin{equation}\label{relation time variable}
\begin{aligned}
	-\frac{\partial V}{\partial s}(s,\bar{X}_s^{t,x;\bar{u}}) &= G\left(s,\bar{X}_s^{t,x;\bar{u}},\bar{u}_s,\frac{\partial V}{\partial x}(s,\bar{X}_s^{t,x;\bar{u}}),\frac{\partial^2 V}{\partial x^2}(s,\bar{X}_s^{t,x;\bar{u}})\right) \\
	&= \inf\limits_{u \in U} G\left(s,\bar{X}_s^{t,x;\bar{u}},u,\frac{\partial V}{\partial x}(s,\bar{X}_{s}^{t,x;\bar{u}}),\frac{\partial^2 V}{\partial x^2}(s,\bar{X}_{s}^{t,x;\bar{u}})\right),		
\end{aligned}
\end{equation}
$ \text{a.e.} s \in [t,T],\, \mathbf{P}\text{-a.s.}$, where $G$ is defined as (2.10). Moreover, if $V(\cdot,\cdot) \in C^{1,3}([0,T] \times \mathbb{R}^n)$ and $\frac{\partial^2 V}{\partial s\partial x}(\cdot,\cdot)$ is continuous, then
\begin{equation}\label{relation state variable}
\left\{
\begin{aligned}
	p_s &= \frac{\partial V}{\partial x}(s, \bar{X}_s^{t,x;\bar{u}}), && \forall s \in [t,T],\ \mathbf{P}\text{-a.s.}, \\
	q_s &= \frac{\partial^2 V}{\partial x^2}(s, \bar{X}_s^{t,x;\bar{u}}) \sigma(s, \bar{X}_s^{t,x;\bar{u}}, \bar{u}_{s}), && \text{a.e.}\ s \in [t,T],\ \mathbf{P}\text{-a.s.},
\end{aligned}
\right.
\end{equation}
where $(p_\cdot,q_\cdot)$ satisfy \eqref{first-order adjoint equation}. Furthermore, if $V(\cdot,\cdot) \in C^{1,4}([0,T] \times \mathbb{R}^n)$ and $\frac{\partial^3 V}{\partial s\partial x^2}(\cdot,\cdot)$ is continuous, then
\begin{equation}\label{relation inequality}
	\frac{\partial^2 V}{\partial x^2}(s,\bar{X}_{s}^{t,x;\bar{u}}) \leq P_s,\quad \forall s \in [t,T],\quad \mathbf{P}\text{-a.s.},
\end{equation}
where $(P_\cdot,Q_\cdot)$ satisfy \eqref{second-order adjoint equation}.
\end{mythm}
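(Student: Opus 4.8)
\emph{Setup and the time-derivative relation \eqref{relation time variable}.} Throughout write $\bar X_s$ for $\bar X^{t,x;\bar u}_s$, and abbreviate $V_s,V_x,V_{xx}$ for the partial derivatives of $V$ evaluated at $(s,\bar X_s)$. The guiding idea is to compare the It\^o expansion of $V$ and its spatial derivatives along $\bar X_\cdot$ with the dynamics of the adjoint processes provided by Lemma \ref{risk-sensitive MP}, exploiting that $\bar X_s$ is an interior minimizer of a deterministic function. First I would use the dynamic programming identity $V(s,\bar X_s)=\bar Y^{t,x;\bar u}_s$ along the optimal path: applying It\^o's formula to $V(s,\bar X_s)$ (here $V\in C^{1,2}$ suffices) and matching the resulting continuous semimartingale termwise with the dynamics of $\bar Y$ in \eqref{scalar-valued BSDE} (uniqueness of the semimartingale decomposition) gives $\bar Z_s=(\partial V/\partial x)^\top\bar\sigma(s)$ from the diffusion parts and, from the drift parts, exactly $-V_s=G(s,\bar X_s,\bar u_s,V_x,V_{xx})$, the first equality in \eqref{relation time variable}. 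The second equality is then immediate from the HJB equation of Lemma \ref{V is solution to HJB equation}, which gives $\inf_{u\in U}G=-V_s$, so $\bar u_s$ attains the infimum. A consequence used below is that $\Phi(s,x):=\frac{\partial V}{\partial s}(s,x)+G\big(s,x,\bar u_s,\frac{\partial V}{\partial x}(s,x),\frac{\partial^2 V}{\partial x^2}(s,x)\big)$ is nonnegative by the HJB equation and vanishes at $x=\bar X_s$, so $x\mapsto\Phi(s,x)$ has an interior minimum there.

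\emph{Identification of the first-order adjoint \eqref{relation state variable}.} Next I would set $\tilde p_s:=\frac{\partial V}{\partial x}(s,\bar X_s)$ and apply It\^o's formula (now using $V\in C^{1,3}$ with $\partial^2 V/\partial s\partial x$ continuous). Its diffusion part is $\frac{\partial^2 V}{\partial x^2}(s,\bar X_s)\bar\sigma(s)$, giving the candidate for $q_s$. For the drift, the first-order condition $\partial_x\Phi(s,\bar X_s)=0$ lets me substitute for $\partial^2 V/\partial s\partial x$; inserting $\partial_p G=f+\mu\sigma\sigma^\top p$ and $\partial_P G=\tfrac12\sigma\sigma^\top$, the third-order terms $V_{xxx}$ cancel precisely because $\partial_P G=\tfrac12\sigma\sigma^\top$. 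After collecting the remaining terms, the drift coincides with that of the first-order adjoint equation \eqref{first-order adjoint equation} with $p=\tilde p$ and $q=\frac{\partial^2 V}{\partial x^2}\bar\sigma$. Since $V(T,\cdot)=g$ matches $p_T=\partial g/\partial x(\bar X_T)$, uniqueness of the linear BSDE \eqref{first-order adjoint equation} gives $p_s=\tilde p_s$ and $q_s=\frac{\partial^2 V}{\partial x^2}(s,\bar X_s)\bar\sigma(s)$, i.e.\ \eqref{relation state variable}.

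\emph{The second-order inequality \eqref{relation inequality}.} Finally, the second-order necessary condition at the interior minimizer $\bar X_s$ gives the symmetric positive semidefinite matrix $S_s:=\frac{\partial^2\Phi}{\partial x^2}(s,\bar X_s)\ge0$. Setting $\tilde P_s:=\frac{\partial^2 V}{\partial x^2}(s,\bar X_s)$ and applying It\^o (now $V\in C^{1,4}$, $\partial^3 V/\partial s\partial x^2$ continuous), I use $S_s$ to substitute for $\partial^3 V/\partial s\partial x^2$; the fourth-order terms $V_{xxxx}$ cancel again because $\partial_P G=\tfrac12\sigma\sigma^\top$, and, using \eqref{relation state variable}, the remaining drift reproduces that of the second-order adjoint equation \eqref{second-order adjoint equation} up to the additional nonnegative term $S_s$. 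Since \eqref{second-order adjoint equation} is affine in $(P,Q)$ once $(p,q)$ are fixed, the difference $\Delta_s:=P_s-\tilde P_s$ solves a linear matrix-valued BSDE $-d\Delta_s=[\mathcal{L}(\Delta_s,\Theta_s)+S_s]ds-\Theta_s dW_s$ with $\Delta_T=0$ (because $V_{xx}(T,\bar X_T)=g_{xx}(\bar X_T)=P_T$) and nonnegative source $S_s$. The inequality $\Delta_s\ge0$, i.e.\ \eqref{relation inequality}, then follows by a positivity argument: for $\mathcal{F}_s$-measurable $\eta$, introducing the forward SDE dual to $\mathcal{L}$ and applying It\^o to $\langle\Delta_r\Xi_r,\Xi_r\rangle$ on $[s,T]$ yields $\langle\Delta_s\eta,\eta\rangle=\mathbb{E}[\int_s^T\langle S_r\Xi_r,\Xi_r\rangle dr\mid\mathcal{F}_s]\ge0$; ranging $\eta$ over a countable dense set and using continuity in $s$ gives the claim for all $s\in[t,T]$.

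\emph{Main obstacle.} The difficulty is twofold. First, the bookkeeping in the last two steps: one must verify that the third- and fourth-order derivatives of $V$ cancel exactly and that the numerous first- and second-order adjoint terms, including the $\mu$-quadratic corrections coming from the quadratic generator in \eqref{scalar-valued BSDE}, reassemble precisely into \eqref{first-order adjoint equation} and \eqref{second-order adjoint equation}; here the specific forms of $\partial_p G$ and $\partial_P G$ are essential. Second, and more delicate, is establishing the pointwise almost-sure matrix inequality $\Delta_s\ge0$ from the linear matrix-valued BSDE, since matrix comparison does not reduce to scalar comparison; I expect to handle this through the dual linear SDE and a conditioning argument rather than a direct comparison theorem.
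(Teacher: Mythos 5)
Your proposal is correct and, for the first two assertions, follows essentially the same route as the paper: establish $V(s,\bar X_s^{t,x;\bar u})=\bar Y_s^{t,x;\bar u}$, apply It\^o's formula to $V$, $\frac{\partial V}{\partial x}$ and $\frac{\partial^2 V}{\partial x^2}$ along the optimal trajectory, use the first- and second-order minimum conditions at the interior minimizer $\bar X_s^{t,x;\bar u}$ of $\Phi(s,\cdot)$ to eliminate the mixed time--space derivatives, observe the cancellation of the third- and fourth-order spatial derivatives, and conclude by uniqueness for the linear adjoint BSDEs. One small point: you invoke the identity $V(s,\bar X_s^{t,x;\bar u})=\bar Y_s^{t,x;\bar u}$ as known, whereas the paper spends the first part of its proof deriving it from the backward-semigroup property of Moon's DPP (the chain of inequalities that must all be equalities); this step does require justification, but your reliance on it is otherwise harmless. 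Also, the paper reaches the drift/diffusion identification through an explicit martingale-representation detour (constructing $m_s$ and $M_\cdot$) before matching with the It\^o expansion, while you match the two semimartingale decompositions directly; these are equivalent. The genuine divergence is in the last step: the paper asserts that the matrix-valued BSDEs for $P$ and $\tilde P$ satisfy ``the condition of the comparison theorem'' of Hu and Peng and stops there, whereas you construct the difference $\Delta=P-\tilde P$, note it solves a linear matrix BSDE with nonnegative source $S_s$ and zero terminal data, and prove $\Delta_s\ge 0$ by a duality argument with the forward SDE $\Xi$ and the quadratic form $\langle\Delta_r\Xi_r,\Xi_r\rangle$. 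Since the Hu--Peng result is stated for componentwise order of vector-valued BSDEs rather than the Loewner order on $\mathcal S^n$, your explicit positivity argument is arguably the more self-contained and more carefully justified way to close this step, at the cost of the extra bookkeeping you already flag (absorbing the scalar terms $\mu\bar\sigma_j(s)^\top p_s$ multiplying $\Delta$ and $\Theta_j$ into the drift and diffusion coefficients of the dual SDE).
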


\begin{proof}
By the generalized risk-sensitive DPP (see Theorem 1 of \cite{M21}), it is easy to obtain that
\begin{equation}\label{generalized risk-sensitive DPP of Moon}
	V(s,\bar{X}_s^{t,x;\bar{u}}) = \bar{Y}_s^{t,x;\bar{u}} = \mathbb{E} \left[\int_s^T\left[\bar{l}(r)+\frac{\mu}{2}|\bar{Z}_r^{t,x;\bar{u}}|^2\right] dr + g(\bar{X}_T^{t, x; \bar{u}})\,\big|\,\mathcal{F}_s^t\right],\, \forall s \in [t,T],\, \mathbf{P}\text{-a.s.}.
\end{equation}
In fact, because
\begin{equation*}
\begin{aligned}
	V(t,x) &= J(t,x;\bar{u}_\cdot) = \bar{Y}_t^{t,x;\bar{u}}
	= \mathbb{E} \left[\int_t^T\left[\bar{l}(r)+\frac{\mu}{2}|\bar{Z}_r^{t,x;\bar{u}}|^2\right] dr +  g(\bar{X}_T^{t, x; \bar{u}})\right] \\
	&= \mathbb{E} \left[ \mathbb{E} \left[\int_t^T\left[\bar{l}(r)+\frac{\mu}{2}|\bar{Z}_r^{t,x;\bar{u}}|^2\right] dr +  g(\bar{X}_T^{t, x; \bar{u}})\,\big|\,\mathcal{F}_s^t\right]\right]	\\
	&= \mathbb{E} \left[\int_t^s\left[\bar{l}(r)+\frac{\mu}{2}|\bar{Z}_r^{t,x;\bar{u}}|^2\right] dr \right]
    + \mathbb{E} \left[ \mathbb{E} \left[\int_s^T\left[\bar{l}(r)+\frac{\mu}{2}|\bar{Z}_r^{t,x;\bar{u}}|^2\right] dr + g(\bar{X}_T^{t, x; \bar{u}})\,\big|\,\mathcal{F}_s^t\right]\right]	\\
	&= \mathbb{E} \left[\int_t^s\left[\bar{l}(r)+\frac{\mu}{2}|\bar{Z}_r^{t,x;\bar{u}}|^2\right] dr \right] + \mathbb{E}\big[J(s,\bar{X}_s^{t,x;\bar{u}};\bar{u}_\cdot)\big]\\
	&\geq \mathbb{E} \left[\int_t^s\left[\bar{l}(r)+\frac{\mu}{2}|\bar{Z}_r^{t,x;\bar{u}}|^2\right]\, dr \right] + \mathbb{E}\big[V(s,\bar{X}_s^{t,x;\bar{u}})\big]\geq V(t,x),
\end{aligned}
\end{equation*}
where the last inequality is due to the property of backward semigroup (see Theorem 1 of \cite{M21}, noting that $t \in [0,T)$ is fixed), and all the inequalities in the aforementioned become equalities. In particular,
\begin{equation*}
	\mathbb{E}\big[J(s,\bar{X}_s^{t,x;\bar{u}};\bar{u}_\cdot)\big] = \mathbb{E}[V(s,\bar{X}_s^{t,x;\bar{u}})].
\end{equation*}
However, by definition $V(s,\bar{X}_s^{t,x;\bar{u}}) \leq J(s,\bar{X}_s^{t,x;\bar{u}};\bar{u}_{\cdot}), \, \mathbf{P}\text{-a.s.}$ Thus
\begin{equation*}
	V(s,\bar{X}_s^{t,x;\bar{u}}) = J(s,\bar{X}_s^{t,x;\bar{u}};\bar{u}_\cdot), \, \mathbf{P}\text{-a.s.},
\end{equation*}
which gives \eqref{generalized risk-sensitive DPP of Moon}. For $s \in [t,T]$, define
\begin{equation*}
	m_s := \mathbb{E} \left[\int_t^T\left[\bar{l}(r)+\frac{\mu}{2}|\bar{Z}_r^{t,x;\bar{u}}|^2\right] dr + g(\bar{X}_T^{t, x; \bar{u}})\,\big|\,\mathcal{F}_s^t\right].
\end{equation*}
Clearly, $m_{\cdot}$ is a square integrable $\mathcal{F}_s^t$-martingale. Thus, by the martingale representation theorem, there exists unique $M_\cdot$ satisfying
\begin{equation*}
	m_s = m_t + \int_t^sM_r dW_r.
\end{equation*}
Thus for $s \in [t,T]$,
\begin{equation*}
	m_s = m_T - \int_s^TM_r dW_r.
\end{equation*}
Then, by \eqref{generalized risk-sensitive DPP of Moon}, we have for $s \in [t,T]$,
\begin{equation}\label{martingale representation theorem}
\begin{aligned}
	V(s,\bar{X}_s^{t,x;\bar{u}})
	&= m_s - \int_t^s\left[\bar{l}(r)+\frac{\mu}{2}|\bar{Z}_r^{t,x;\bar{u}}|^2\right] dr\\
	&= m_T - \int_s^TM_r dW_r - \int_t^s\left[\bar{l}(r)+\frac{\mu}{2}|\bar{Z}_r^{t,x;\bar{u}}|^2\right] dr\\
	&= \int_s^T\left[\bar{l}(r)+\frac{\mu}{2}|\bar{Z}_r^{t,x;\bar{u}}|^2\right] dr + V(T,\bar{X}_T^{t,x;\bar{u}}) - \int_s^TM_r dW_r.
\end{aligned}	
\end{equation}
Then, using It\^o's formula to $V(\cdot,\bar{X}_\cdot^{t,x;\bar{u}})$, we have
\begin{equation}\label{Ito formula}
\begin{aligned}
	dV(s, \bar{X}_s^{t,x;\bar{u}})
	&=\bigg\{ \frac{\partial V}{\partial s}(s, \bar{X}_s^{t,x;\bar{u}})+ \left\langle \frac{\partial V}{\partial x}(s, \bar{X}_s^{t,x;\bar{u}}), f(s, \bar{X}_s^{t,x;\bar{u}}, \bar{u}_s)\right\rangle \\
	&\qquad + \frac{1}{2}\operatorname{tr}\left[\sigma(s, \bar{X}_s^{t,x;\bar{u}}, \bar{u}_s)^\top\frac{\partial^2 V}{\partial x^2}(s, \bar{X}_s^{t,x;\bar{u}}) \sigma(s, \bar{X}_s^{t,x;\bar{u}}, \bar{u}_s)\right]\bigg\} ds \\
	&\quad + \frac{\partial V}{\partial x}(s, \bar{X}_s^{t,x;\bar{u}})^\top \sigma(s, \bar{X}_s^{t,x;\bar{u}}, \bar{u}_s) dW_{s}.
\end{aligned}
\end{equation}
Comparing \eqref{martingale representation theorem} with \eqref{Ito formula}, we conclude that, for any $s \in [t,T]$,
\begin{equation}\label{comparision}
\begin{cases}
\begin{aligned}
	&\frac{\partial V}{\partial s}(s, \bar{X}_s^{t,x;\bar{u}})+ \left\langle \frac{\partial V}{\partial x}(s, \bar{X}_s^{t,x;\bar{u}}), f(s, \bar{X}_s^{t,x;\bar{u}}, \bar{u}_s)\right\rangle \\
	&\ + \frac{1}{2}\operatorname{tr}\left[\sigma(s, \bar{X}_s^{t,x;\bar{u}}, \bar{u}_s)^\top\frac{\partial^2 V}{\partial x^2}(s, \bar{X}_s^{t,x;\bar{u}}) \sigma(s, \bar{X}_s^{t,x;\bar{u}}, \bar{u}_s)\right]
	= -\bar{l}(s) - \frac{\mu}{2}|\bar{Z}_s^{t,x;\bar{u}}|^2, \\
	&\frac{\partial V}{\partial x}(s,\bar{X}_s^{t,x;\bar{u}})^\top\sigma(s,\bar{X}_s^{t,x;\bar{u}},\bar{u}_s) = M_s,\quad \mathbf{P}\text{-a.s.}
\end{aligned}
\end{cases}
\end{equation}
However, by the uniqueness of solution to BSDE \eqref{scalar-valued BSDE}, we have for all $s \in [t,T]$,
\begin{equation}\label{uniqueness of BSDE solution}
\left\{
\begin{aligned}
	\bar{Y}_s^{t,x;\bar{u}}&= V(s,\bar{X}_s^{t,x;\bar{u}}), \\
	\bar{Z}_s^{t,x;\bar{u}}&= \frac{\partial V}{\partial x}(s,\bar{X}_s^{t,x;\bar{u}})^{\top}\sigma(s,\bar{X}_{s}^{t,x;\bar{u}},\bar{u}_{s}),\quad \mathbf{P}\text{-a.s.}
\end{aligned}
\right.
\end{equation}
Then, by \eqref{comparision} and \eqref{uniqueness of BSDE solution}, we get
\begin{equation*}
\begin{aligned}
	\frac{\partial V}{\partial s}(s,\bar{X}_s^{t,x;\bar{u}})= &- \left\langle \frac{\partial V}{\partial x}(s,\bar{X}_s^{t,x;\bar{u}}), f(s,\bar{X}_s^{t,x;\bar{u}},\bar{u}_s) \right\rangle -\bar{l}(s)
    - \frac{\mu}{2}\left|\frac{\partial V}{\partial x}(s,\bar{X}_s^{t,x;\bar{u}})^\top\sigma(s,\bar{X}_s^{t,x;\bar{u}},\bar{u}_s)\right|^2 \\
	&- \frac{1}{2} \operatorname{tr}\left[\sigma(s, \bar{X}_s^{t,x;\bar{u}}, \bar{u}_s)^\top\frac{\partial^2 V}{\partial x^2}(s, \bar{X}_s^{t,x;\bar{u}}) \sigma(s, \bar{X}_s^{t,x;\bar{u}}, \bar{u}_s)\right] \\
	&=-G\left(t,\bar{X}_s^{t,x;\bar{u}},\bar{u}_s,\frac{\partial V}{\partial x}(s,\bar{X}_s^{t,x;\bar{u}}),\frac{\partial^2 V}{\partial x^2}(s,\bar{X}_s^{t,x;\bar{u}})\right).
\end{aligned}
\end{equation*}
This proves the first equality in \eqref{relation time variable}. Since $V(\cdot,\cdot) \in C^{1,2}([0,T] \times \mathbb{R}^n)$, it satisfies the HJB equation \eqref{HJB equation}, which implies the second equality in \eqref{relation time variable}. Also, by \eqref{HJB equation} we have
\begin{equation}
\begin{aligned}
	0&=\frac{\partial V}{\partial s}(s,\bar{X}_s^{t,x;\bar{u}}) + G\left(s,\bar{X}_s^{t,x;\bar{u}},\bar{u}_s,\frac{\partial V}{\partial x}(s,\bar{X}_s^{t,x;\bar{u}}),\frac{\partial^2 V}{\partial x^2}(s,\bar{X}_s^{t,x;\bar{u}})\right)\\
	&\leq \frac{\partial V}{\partial s}(s,x) + G\left(s,x,\bar{u}_{s},\frac{\partial V}{\partial x}(s,x),\frac{\partial^2 V}{\partial x^2}(s,x)\right).
\end{aligned}
\end{equation}
Consequently, if $V(\cdot,\cdot) \in C^{1,3}([0,T] \times \mathbb{R}^n)$ with $\frac{\partial^2 V}{\partial s\partial x}$ being also continuous, then
\begin{equation}\label{first order min condition}
    \left.\frac{\partial}{\partial x}
	\middle\{
	\frac{\partial V}{\partial s}(s,x) + G\left(s,x,\bar{u}_{s},\frac{\partial V}{\partial x}(s,x),\frac{\partial^2 V}{\partial x^2}(s,x)\right)
	\middle\}
	\right|_{x=\bar{X}_s^{t,x;\bar{u}}} = 0,\quad \forall s \in [t,T],
\end{equation}
which is the first-order minimum condition. Furthermore, if  $V(\cdot,\cdot) \in C^{1,4}([0,T] \times \mathbb{R}^n)$ and $\frac{\partial^3 V}{\partial s\partial x^2}$ is continuous, then the second-order minimum condition holds:
\begin{equation}\label{second order min condition}
	\left.\frac{\partial^2}{\partial x^2}
	\middle\{
	\frac{\partial V}{\partial s}(s,x) + G\left(s,x,\bar{u}_{s},\frac{\partial V}{\partial x}(s,x),\frac{\partial^2 V}{\partial x^2}(s,x)\right)
	\middle\}
	\right|_{x=\bar{X}_s^{t,x;\bar{u}}} \geq 0,\quad \forall s \in [t,T].
\end{equation}
On the one hand, \eqref{first order min condition} yields that (recall \eqref{generalized Hamiltonian}), for any $s \in [t,T]$,
\begin{equation}\label{first order min condition 1}
\begin{aligned}
	0=&\ \frac{\partial^2 V}{\partial s\partial x}(s,\bar{X}_s^{t,x;\bar{u}}) + \frac{\partial^2 V}{\partial x^2}(s,\bar{X}_s^{t,x;\bar{u}})\bar{f}(s)
    + \frac{\partial \bar f}{\partial x}(s)^\top \frac{\partial V}{\partial x}(s,\bar{X}_s^{t,x;\bar{u}})\\
	&+\frac{\partial \bar l}{\partial x}(s) + \frac{1}{2} \operatorname{tr}\left[\bar{\sigma}(s)^\top\frac{\partial^3 V}{\partial x^3}(s,\bar{X}_s^{t,x;\bar{u}})\bar{\sigma}(s)\right]
    + \sum_{j=1}^d \frac{\partial \bar \sigma_j}{\partial x}(s)^\top \frac{\partial^2 V}{\partial x^2}(s,\bar{X}_s^{t,x;\bar{u}})\bar{\sigma}_j(s)\\
	&+\mu\sum_{j=1}^d \bar{\sigma}_j(s)^\top \frac{\partial V}{\partial x}(s,\bar{X}_s^{t,x;\bar{u}})\frac{\partial \bar \sigma_j}{\partial x}(s)^\top \frac{\partial V}{\partial x}(s,\bar{X}_s^{t,x;\bar{u}}) \\
	&+ \mu\sum_{j=1}^d \frac{\partial^2 V}{\partial x^2}(s,\bar{X}_s^{t,x;\bar{u}})\bar{\sigma}_j(s)\bar{\sigma}_j(s)^\top\frac{\partial V}{\partial x}(s,\bar{X}_s^{t,x;\bar{u}}),
\end{aligned}
\end{equation}
where $\operatorname{tr}\left(\bar{\sigma}^\top \frac{\partial^3 V}{\partial x^3}\bar{\sigma}\right):= \left(\operatorname{tr}\left(\bar{\sigma}^\top\frac{\partial^2 (\frac{\partial V}{\partial x})^1}{\partial x^2}\bar{\sigma}\right),\cdots,\operatorname{tr}\left(\bar{\sigma}^\top\frac{\partial^2 (\frac{\partial V}{\partial x})^n}{\partial x^2}\bar{\sigma}\right)\right)^{\top}$, with $\left((\frac{\partial V}{\partial x})^1,\cdots,(\frac{\partial V}{\partial x})^n\right)^\top = \frac{\partial V}{\partial x}$. On the other hand, applying It\^o's formula to $\frac{\partial V}{\partial x}(\cdot,\bar{X}_\cdot^{t,x;\bar{u}})$, we get
\begin{equation}\label{Ito formula 1}
\begin{aligned}
	& d\frac{\partial V}{\partial x}(s, \bar{X}_{s}^{t,x;\bar{u}})
    = \bigg\{ \frac{\partial^2 V}{\partial s\partial x}(s, \bar{X}_s^{t,x;\bar{u}})+ \frac{\partial^2 V}{\partial x^2}(s, \bar{X}_s^{t,x;\bar{u}}) \bar{f}(s)\\
	&\quad + \frac{1}{2} \operatorname{tr}\left[ \bar{\sigma}(s)^\top \frac{\partial^3 V}{\partial x^3}(s, \bar{X}_s^{t,x;\bar{u}}) \bar{\sigma}(s) \right] \bigg\} ds
    + \frac{\partial^2 V}{\partial x^2}(s, \bar{X}_s^{t,x;\bar{u}}) \bar{\sigma}(s) dW_s \\
	&= -\bigg\{ \frac{\partial \bar f}{\partial x}(s)^\top \frac{\partial V}{\partial x}(s, \bar{X}_s^{t,x;\bar{u}})+ \frac{\partial \bar l}{\partial x}(s)
	+ \sum_{j=1}^d \frac{\partial \bar \sigma_j}{\partial x}(s)^\top \frac{\partial^2 V}{\partial x^2}(s,\bar{X}_s^{t,x;\bar{u}})\bar{\sigma}_j(s)  \\
	&\quad + \mu \sum_{j=1}^d \bar{\sigma}_j(s)^\top \frac{\partial V}{\partial x}(s, \bar{X}_{s}^{t,x;\bar{u}})\frac{\partial \bar \sigma_j}{\partial x}(s)^\top \frac{\partial V}{\partial x}(s, \bar{X}_s^{t,x;\bar{u}}) \\
	&\quad + \mu \sum_{j=1}^d \frac{\partial^2 V}{\partial x^2}(s,\bar{X}_s^{t,x;\bar{u}})\bar{\sigma}_j(s) \bar{\sigma}_j(s)^\top \frac{\partial V}{\partial x}(s, \bar{X}_{s}^{t,x;\bar{u}}) \bigg\} ds
	+ \frac{\partial^2 V}{\partial x^2}(s, \bar{X}_s^{t,x;\bar{u}}) \bar{\sigma}(s) dW_s.
\end{aligned}
\end{equation}
Note that $V(\cdot,\cdot)$ solves \eqref{HJB equation}, and thus $\frac{\partial V}{\partial x}(T, \bar{X}_T^{t,x;\bar{u}}) = \frac{\partial g}{\partial x}(\bar{X}_T^{t,x;\bar{u}})$. Hence, by the uniqueness of the solutions to \eqref{first-order adjoint equation}, we obtain \eqref{relation state variable}.

Moreover, \eqref{Ito formula 1} yields that, for $s \in [t,T]$,
\begin{equation}\label{Ito formula 2}
\begin{aligned}
	&\frac{\partial^3 V}{\partial s\partial x^2}(s,\bar{X}_s^{t,x;\bar{u}}) + \frac{\partial^3 V}{\partial x^3}(s,\bar{X}_s^{t,x;\bar{u}})\bar{f}(s)
    +\frac{\partial^2 V}{\partial x^2}(s,\bar{X}_{s}^{t,x;\bar{u}})\frac{\partial \bar f}{\partial x}(s) +\frac{\partial^2 l}{\partial x^2}(s)\\
    &+\frac{\partial^2 f}{\partial x^2}(s)^\top \frac{\partial V}{\partial x}(s,\bar{X}_{s}^{t,x;\bar{u}})+ \frac{\partial \bar f}{\partial x}(s)^\top \frac{\partial^2 V}{\partial x^2}(s,\bar{X}_s^{t,x;\bar{u}})\\
	&+ \frac{1}{2} \operatorname{tr}\left[\bar{\sigma}(s)^\top \frac{\partial^4 V}{\partial x^4}(s,\bar{X}_s^{t,x;\bar{u}})\bar{\sigma}(s)\right]
    +\sum_{j=1}^d \frac{\partial^2 \bar \sigma_j}{\partial x^2}(s)^\top \frac{\partial^2 V}{\partial x^2}(s,\bar{X}_s^{t,x;\bar{u}})\bar{\sigma}_j(s)\\
	&+\sum_{j=1}^d \frac{\partial \bar \sigma_j}{\partial x}(s)^\top\left(\frac{\partial^2 V}{\partial x^2}(s,\bar{X}_s^{t,x;\bar{u}})
    + \mu \frac{\partial V}{\partial x}(s,\bar{X}_s^{t,x;\bar{u}})\frac{\partial V}{\partial x}(s,\bar{X}_{s}^{t,x;\bar{u}})^{\top}\right)\frac{\partial \bar \sigma_j}{\partial x}(s) \\
	&+ \sum_{j=1}^d \frac{\partial \bar \sigma_j}{\partial x}(s)^\top\left(\frac{\partial^3 V}{\partial x^3}(s, \bar{X}_s^{t,x;\bar{u}})\bar{\sigma}_j(s)
    + \mu \frac{\partial^2 V}{\partial x^2}(s,\bar{X}_{s}^{t,x;\bar{u}})\bar{\sigma}_j(s)\frac{\partial V}{\partial x}(s,\bar{X}_s^{t,x;\bar{u}})^\top\right.\\
	&\quad +\left.\mu \frac{\partial V}{\partial x}(s,\bar{X}_s^{t,x;\bar{u}})\bar{\sigma}_j(s)^\top \frac{\partial^2 V}{\partial x^2}(s, \bar{X}_s^{t,x;\bar{u}})^\top\right) \\
	&+ \sum_{j=1}^d \left(\frac{\partial^3 V}{\partial x^3}(s, \bar{X}_s^{t,x;\bar{u}})\bar{\sigma}_j(s)
    + \mu \frac{\partial^2 V}{\partial x^2}(s,\bar{X}_{s}^{t,x;\bar{u}})\bar{\sigma}_j(s)\frac{\partial V}{\partial x}(s,\bar{X}_s^{t,x;\bar{u}})\right.\\
	&\quad +\left.\mu \frac{\partial^2 V}{\partial x^2}(s, \bar{X}_s^{t,x;\bar{u}})\bar{\sigma}_j(s)\frac{\partial V}{\partial x}(s, \bar{X}_s^{t,x;\bar{u}})^{\top}\right)\frac{\partial \bar \sigma_j}{\partial x}(s) \\
	&+ \mu\sum_{j=1}^d \frac{\partial^2 V}{\partial x^2}(s, \bar{X}_s^{t,x;\bar{u}})\bar{\sigma}_j(s)\bar{\sigma}_j(s)^\top \frac{\partial^2 V}{\partial x^2}(s, \bar{X}_s^{t,x;\bar{u}})^\top \\
    &+ \mu\sum_{j=1}^d \frac{\partial V}{\partial x}(s, \bar{X}_s^{t,x;\bar{u}})\frac{\partial^2 \bar \sigma_j}{\partial x^2}(s)\bar{\sigma}_j(s)^\top \frac{\partial V}{\partial x}(s, \bar{X}_s^{t,x;\bar{u}})^\top\\
	&+ \mu\sum_{j=1}^d \frac{\partial^3 V}{\partial x^3}(s, \bar{X}_s^{t,x;\bar{u}})\bar{\sigma}_j(s)\bar{\sigma}_j(s)^\top \frac{\partial V}{\partial x}(s,\bar{X}_s^{t,x;\bar{u}})\geq0.
\end{aligned}
\end{equation}
In the above and what follows, the notation of partial derivatives has its own definitions which we will not clarify on by one, because of limited space. (For simplicity, we can verify the calculus
just using $n=1$, i.e., $x$ is one-dimensional.)

Applying It\^o's formula to $\frac{\partial^2 V}{\partial x^2}(\cdot,\bar{X}_\cdot^{t,x;\bar{u}})$, we get
\begin{equation}\label{Ito formula 3}
\begin{aligned}
	& d\frac{\partial^2 V}{\partial x^2}(s, \bar{X}_s^{t,x;\bar{u}}) = \bigg\{ \frac{\partial^3 V}{\partial s\partial x^2}(s, \bar{X}_s^{t,x;\bar{u}})
	 + \frac{\partial^3 V}{\partial x^3}(s, \bar{X}_s^{t,x;\bar{u}}) \bar{f}(s)\\
	&\quad + \frac{1}{2} \operatorname{tr}\left[ \bar{\sigma}(s)^\top \frac{\partial^4 V}{\partial x^4}(s, \bar{X}_s^{t,x;\bar{u}}) \bar{\sigma}(s) \right] \bigg\} ds
	 + \sum_{j=1}^d \frac{\partial^3 V}{\partial x^3}(s, \bar{X}_s^{t,x;\bar{u}}) \bar{\sigma}_j(s) dW_{js}.
\end{aligned}
\end{equation}
For all $s \in [t,T]$, define
\begin{equation}\label{tilde P Q}
	\tilde{P}_s := \frac{\partial^2 V}{\partial x^2}(s, \bar{X}_s^{t,x;\bar{u}}), \quad \tilde{Q}_{js} := \frac{\partial^3 V}{\partial x^3}(s, \bar{X}_s^{t,x;\bar{u}})\bar{\sigma}_j(s),
\end{equation}
and we have
\begin{equation}\label{BSDE of tilde P Q}
\begin{aligned}
	-d\tilde{P}_s = -&\bigg\{ \frac{\partial^3 V}{\partial s\partial x^2}(s, \bar{X}_s^{t,x;\bar{u}})+ \frac{\partial^3 V}{\partial x^3}(s, \bar{X}_s^{t,x;\bar{u}}) \bar{f}(s) \\
	&\quad + \frac{1}{2} \operatorname{tr}\left[ \bar{\sigma}(s)^\top \frac{\partial^4 V}{\partial x^4}(s, \bar{X}_s^{t,x;\bar{u}}) \bar{\sigma}(s) \right] \bigg\} ds  - \sum_{j=1}^d\tilde{Q}_{js}dW_{js}.
\end{aligned}
\end{equation}
From \eqref{tilde P Q} and the continuity of $\frac{\partial^3 V}{\partial s\partial x^2}(\cdot, \cdot)$ as well as \eqref{relation state variable}, we have
\begin{equation}
\begin{aligned}
	&-\bigg\{ \frac{\partial^3 V}{\partial s\partial x^2}(s, \bar{X}_s^{t,x;\bar{u}})+ \frac{\partial^3 V}{\partial x^3}(s, \bar{X}_s^{t,x;\bar{u}}) \bar{f}(s)
    + \frac{1}{2} \operatorname{tr}\left[ \bar{\sigma}(s)^\top \frac{\partial^4 V}{\partial x^4}(s, \bar{X}_s^{t,x;\bar{u}}) \bar{\sigma}(s) \right] \bigg\} \\
	\leq\ & \frac{\partial \bar f}{\partial x}(s)^\top\tilde{P}_s +\tilde{P}_s\frac{\partial \bar f}{\partial x}(s)
    + \sum_{j=1}^d \frac{\partial \bar \sigma_j}{\partial x}(s)^\top\left(\tilde{P}_s + \mu p_sp_s^\top\right)\frac{\partial \bar \sigma_j}{\partial x}(s)+ \mu\sum_{j=1}^d q_{js}q_{js}^\top \\
	&+ \sum_{j=1}^d \frac{\partial \bar \sigma_j}{\partial x}(s)^\top\left(\tilde{Q}_{js} + \mu \tilde{P}_s\bar{\sigma}_j(s)p_s^\top +\mu p_sq_{js}^\top\right) \\
	&+ \sum_{j=1}^d \left(\tilde{Q}_{js} + \mu \tilde{P}_s\bar{\sigma}_j(s)p_s^\top+\mu q_{js}p_s^\top\right)\frac{\partial \bar \sigma_j}{\partial x}(s)
	+ \frac{\partial^2 \bar H}{\partial x^2}(s)+ \mu\sum_{j=1}^d \tilde{Q}_{js}\bar{\sigma}_j(s)^\top p_s.
\end{aligned}
\end{equation}
Note that $P_T = \tilde{P}_T = \frac{\partial^2 V}{\partial x^2}(T, \bar{X}_T^{t,x;\bar{u}}) = \frac{\partial^2 g}{\partial x^2}( \bar{X}_T^{t,x;\bar{u}})$. Using the above relation, one can check that the condition  of the comparison theorem (see, Hu and Peng \cite{HP06}) for the matrix-valued BSDEs \eqref{second-order adjoint equation} and \eqref{BSDE of tilde P Q} holds, and thus we obtain \eqref{relation inequality}. The proof is complete.
\end{proof}

\section{Applications to financial portfolio optimization}

In this section, we consider a risk-sensitive portfolio optimization problem for a factor model (see \cite{BP99,KN02}). In this problem, the optimal portfolio in the state feedback form is obtained by both MP and DPP approaches, and the relations in Theorem \ref{relation of MP and DPP} are illustrated explicitly.

We make a slight modification to the factor model studied by Kuroda and Nagai \cite{KN02}, considering a market with two kinds of securities and a factor.

(i) A risk-free security (e.g., a bond), where the price $R_t$ at time $t$ is given by
\begin{equation}\label{bond}
	dR_t = r_tR_t\,dt, \quad R_0>0.
\end{equation}

(ii) A risky security (e.g., a stock), where the price $S_t$ at time $t$ is given by
\begin{equation}\label{stock}
	dS_t = S_t\left\{(a + AX_t)\,dt + \sigma d\,W_t\right\}, \quad S_0>0.
\end{equation}

(iii) A factor (e.g., short-term interest rates), where the level $X_t$ at time $t$ is given by
\begin{equation}\label{factor}
	dX_t = (b + BX_t)\,dt + \Lambda d\,W_t, \quad X_0=x,
\end{equation}
where $W_t\equiv(W_t^1,W_t^2)^\top$ is an $\mathbb{R}^2$-valued standard Brownian motion defined on a filtered probability space $(\Omega, \mathcal{F}, \{\mathcal{F}_t\}_{t\geq 0}, \mathbf{P})$. Here $a,A,b,B \in \mathbb{R}$ are constants, $r_\cdot$ is a bounded deterministic function. $\sigma = [\sigma_1,\sigma_2] \in \mathbb{R}^{1\times2}$ and $\Lambda = [\Lambda_1,\Lambda_2] \in \mathbb{R}^{1\times2}$ are constant vectors with $\sigma\sigma^\top > 0$.

At time $t$, let $u_t$ denote the investment strategy of the investor in the risky security, then $1-u_t$ denote that in the risk-free security. Set $\mathcal{G}_{t} := \sigma(S_r,X_r;r \leq t)$, and we define
\begin{equation}
\mathcal{U}[0,T] := \left\{u \colon [0,T] \times \Omega \to \mathbb{R} \;\Big|\; u \text{ is } \mathcal{G}_t\text{-progressively measurable},\ \mathbb{E} \left[ \int_0^T |u(t)|^2\, dt \right] < \infty \right\},
\end{equation}
which denote the set of all admissible investment strategies. For given $u_\cdot \in \mathcal U[0,T]$, $V_t$ representing the investor's wealth at time $t$ is given as follows:
\begin{equation}\label{wealth}
\left\{
 \begin{aligned}
 	dV_t&= V_t\big[r_t\,dt + u_t(a + AX_t-r_t)\,dt + u_t\sigma\,dW_t\big],\\
 	V_0&= v.
 \end{aligned}
\right.
\end{equation}

For given initial wealth $v > 0$ and $\theta > 0$, the investor wants to choose an optimal portfolio $\bar{u}_{\cdot} \in \mathcal U[0,T]$ to maximize the following risk-sensitive expected growth rate up to time horizon $T$:
\begin{equation}\label{bjective functional}
	\tilde{J}(u_\cdot) = -\frac{1}{\theta}\log \mathbb{E} \big[\exp\left\{-\theta \log(V_T)\right\}\big].
\end{equation}

Noting that $V_\cdot$ satisfies \eqref{wealth}, we have
\begin{equation}
	\begin{aligned}
		\exp\left\{-\theta \log(V_t)\right\}=v^{-\theta}\exp\left\{\theta \int_0^t l(s,X_s,u_s)\,ds - \frac{1}{2}\theta^2 \int_0^t\sigma\sigma^\top u_s^2\,ds - \theta \int_0^tu_{s}\sigma\,dW_s\right\},
	\end{aligned}
\end{equation}
where $l(s,X,u) := \frac{1}{2}(\theta + 1)\sigma\sigma^\top u^2 - r_s -u(a + AX- r_s)$.

We introduce a new probability measure $\hat{\mathbf{P}}$ on $(\Omega, \mathcal{F})$ defined by
\begin{equation*}
	\left. \frac{d\hat{\mathbf{P}}}{d\mathbf{P}}\right|_{\mathcal{F}_t} = Z_t,
\end{equation*}
where $Z_t := \exp\left\{- \frac{1}{2}\theta^2 \int_0^t\sigma\sigma^\top u_s^2\,ds - \theta \int_0^tu_s\sigma\,dW_s\right\}$.

We denote by $\mathcal{V}[0,T]$ the set of investment strategies $u_\cdot \in \mathcal{U}[0,T]$  such that $\hat{\mathbf{P}}$ is a probability measure, i.e., $\mathbb{E}[Z_t] = 1$. According to Girsanov's
theorem, for any $u_\cdot \in \mathcal{V}[0,T]$,
\begin{equation*}
	\hat{W}_t := W_t + \theta \int_0^t\sigma^\top u_s\,ds
\end{equation*}
is a standard Brownian motion under the probability measure $\hat{\mathbf{P}}$ and
\begin{equation}\label{wealth new}
\left\{
\begin{aligned}
	dX_t &= (b + BX_t - \theta\Lambda\sigma^\top u_s)\,dt + \Lambda\,d\hat{W}_t,\\
     X_0 &= x.
\end{aligned}
\right.
\end{equation}

We can now introduce the auxiliary objective functional under the measure $\hat{\mathbf{P}}$:
\begin{equation}\label{auxiliary objective functional}
	\hat{J}(u_\cdot) = \log v -\frac{1}{\theta}\log \hat{\mathbb{E}} \left[\exp\left\{\theta \int_0^T l(s,X_s,u_s)\,ds\right\}\right],
\end{equation}
where $\hat{\mathbb{E}}[\cdot]$ denotes the expectation with respect to $\hat{\mathbf{P}}$. Let $J(u_\cdot)=-\hat{J}(u_\cdot)$, then our portfolio optimization problem can be rewritten as
\begin{equation}\label{portfolio optimization}
	J(\bar{u}_\cdot) = \inf\limits_{u_\cdot \in \mathcal{V}[0,T]}J(u_\cdot).
\end{equation}

Since we are going to involve the DPP in treating the above problem, we will adopt the formulation as in Section 2. Let $T>0$ be given. For any $(t,x) \in [0,T] \times \mathbb{R}$, by \eqref{wealth new},
consider the following SDE:
\begin{equation}\label{wealth new 1}
\left\{
\begin{aligned}
	dX_s^{t,x;u} &= \big(b + BX_s^{t,x;u} - \theta\Lambda\sigma^\top u_s)\,ds + \Lambda\,d\hat{W}_s,\\
     X_0^{t,x;u} &= x.
\end{aligned}
\right.
\end{equation}
And our risk-sensitive portfolio optimization problem is to find an optimal portfolio $\bar{u}_\cdot \in \mathcal{V}[0,T]$ to minimize the cost functional $J(t,x;u_\cdot) = -\log v +\frac{1}{\theta}\log \hat{\mathbb{E}} \left[\exp\left\{\theta \int_t^T l(s,X_s^{t,x;u},u_s)\,ds\right\}\right]$. We define the value function as
\begin{equation}\label{value function example}
	V(t,x) := J(t,x;\bar{u}_\cdot) = \inf\limits_{u_\cdot \in \mathcal{V}[t,T]}J(t,x;u_\cdot).
\end{equation}

We can check that all the assumptions in Section 2 are satisfied. Then, we can use both DPP and MP approached to solve problem \eqref{value function example}.

\subsection{MP Approach}

In this case, the Hamiltonian function given by
\begin{equation}\label{Hamiltonian example}
\begin{aligned}
 	H(s,x,u,p,q) &= p\big(b + Bx-\theta\Lambda\sigma^\top u\big) + \frac{1}{2}(\theta + 1)\sigma\sigma^\top u^2 - r_s \\
 	&\quad -u(a + AX_s- r_s)+ q^\top\Lambda + \theta\Lambda_1^2p^2 + \theta\Lambda_2^2p^2,
\end{aligned}
\end{equation}
and the corresponding $\mathcal H$-function is
\begin{equation}\label{mathcal Hamiltonian example}
	\mathcal{H}(s,x,u) = H(s,x,u,p,q) - \frac{1}{2} \Lambda_1^2(P + \theta p^2)- \frac{1}{2} \Lambda_2^2(P + \theta p^2).
\end{equation}
The first-and second-order adjoint equations are
\begin{equation}\label{first-order adjoint equation example}
\left\{
\begin{aligned}
	dp_s&=-\big(Bp_s - Au_s + \theta \Lambda q_s^\top p_s\big)\,ds + q_s\,d\hat{W}_s,\\
	p_T&=0,
\end{aligned}
\right.
\end{equation}
and
\begin{equation}\label{second-order adjoint equation example}
\left\{
\begin{aligned}
	dP_s&=-\big(2BP_s + \theta q_sq_s^\top + \theta Q_s \Lambda^\top p_s\big)\,ds + Q_s\,d\hat{W}_s,\\
	P_T&=0,
\end{aligned}
\right.
\end{equation}
respectively. Let $\bar{u}_\cdot$ be a candidate optimal portfolio, $\bar{X}_\cdot^{t,x,\bar{u}}$ be the corresponding solution to the controlled SDE \eqref{wealth new 1}, and $(\bar{p}_\cdot,\bar{q}_\cdot)$ and $(\bar{P}_\cdot,\bar{Q}_\cdot)$ be the corresponding solution to the adjoint equations \eqref{first-order adjoint equation example} and \eqref{second-order adjoint equation example}, respectively. Now the $\mathcal{H}$-function \eqref{mathcal Hamiltonian example} is
\begin{equation}\label{mathcal Hamiltonian example 1}\hspace{-2mm}
\begin{aligned}
	\mathcal{H}(s,\bar{X}_s^{t,x;\bar{u}},u) &=
	\bar{p}_s\big(b + B\bar{X}_s^{t,x;\bar{u}}-\theta\Lambda\sigma^\top u\big) + \frac{1}{2}(\theta + 1)\sigma\sigma^\top u^2 - r_s -u\big(a + A\bar{X}_s^{t,x;\bar{u}}- r_s\big)\\
	&\quad + \bar{q}_s^\top\Lambda + \theta\Lambda_1^2\bar{p}_s^2 + \theta\Lambda_2^2\bar{p}_s^2- \frac{1}{2} \Lambda_1^2(\bar{P}_s + \theta \bar{p}_s^2)- \frac{1}{2} \Lambda_2^2(\bar{P}_s + \theta \bar{p}_s^2).
\end{aligned}
\end{equation}
Then, by the minimum condition \eqref{maximum condition-mathcal H}, we have
\begin{equation}\label{optimal portfolio}
	\bar{u}_s=\frac{1}{\theta+1}(\sigma\sigma^\top)^{-1}\left(\theta\bar{p}_s\Lambda\sigma^\top+a + A\bar{X}_s^{t,x;\bar{u}}- r_s\right),\quad \text{a.e.} s \in [t,T],\quad \hat{\mathbf{P}}\text{-a.s.}
\end{equation}
Therefore, an adapted solution triple $(\bar{X}_\cdot^{t,x;\bar{u}},\bar{p}_\cdot,\bar{q}_\cdot)$ can be obtained by solving the following FBSDE:
\begin{equation}\label{FBSDE example}
\left\{
\begin{aligned}
	d\bar{X}_s^{t,x;\bar{u}}&= \big(b + B\bar{X}_s^{t,x;\bar{u}} - \theta\Lambda\sigma^\top\bar{u}_s\big)\,ds + \Lambda\,d\hat{W}_s,\\
	d\bar{p}_s&= -\big(B\bar{p}_s - A\bar{u}_s + \theta \Lambda \bar{q}_s^\top\bar{p}_s\big)\,ds + \bar{q}_s\,d\hat{W}_s,\\
	\bar{X}_t^{t,x;\bar{u}}&= x,\quad \bar{p}_T=0.
\end{aligned}
\right.
\end{equation}
To solve \eqref{FBSDE example}, we conjecture the solution is related by the following:
\begin{equation}\label{relation p and X}
	\bar{p}_s=\Gamma_s\bar{X}_s^{t,x;\bar{u}} + \phi_s,\quad s \in [t,T],
\end{equation}
where $\Gamma_\cdot \in C^1([t,T];\mathbb{R})$ and $\phi_\cdot \in C^1([t,T];\mathbb{R})$, with $\Gamma_T= 0,\,\phi_T=0$. By applying It\^o's formula to \eqref{relation p and X}, we get
\begin{equation}\label{Ito formula example}
\begin{aligned}
 	d\bar{p}_s=\left\{\dot{\Gamma_{s}}\bar{X}_s^{t,x;\bar{u}} + \Gamma_{s}\big(b + B\bar{X}_s^{t,x;\bar{u}} - \theta\Lambda\sigma^\top\bar{u}_s\big) + \dot{\phi_s} \right \}ds + \Gamma_s\Lambda \,d\hat{W}_s.
\end{aligned}
\end{equation}
On the other hand, after substituting \eqref{relation p and X} into the backward equation of \eqref{FBSDE example}, we arrive at
\begin{equation}\label{substituting}
 	d\bar{p}_s=-\big(B\Gamma_s\bar{X}_s^{t,x;\bar{u}} + B\phi_s - A\bar{u}_s + \theta \Lambda \bar{q}_s^\top\Gamma_s\bar{X}_s^{t,x;\bar{u}} + \theta \Lambda \bar{q}_s^\top\phi_s\big)\,ds + \bar{q}_s\,d\hat{W}_s.
\end{equation}
Equating the coefficients of \eqref{Ito formula example} and \eqref{substituting}, gives
\begin{equation}\label{relarion p q and X example}
	(\bar{p}_s,\bar{q}_s) = \big(\Gamma_s\bar{X}_s^{t,x;\bar{u}} + \phi_s,\Gamma_s\Lambda\big),\quad \text{a.e.} s \in [t,T],\quad \hat{\mathbf{P}}\text{-a.s.},
\end{equation}
where $\Gamma_\cdot$ is the solution to the following Riccati type equation:
\begin{equation}\label{Riccati}
\left\{
\begin{aligned}
	& \dot{\Gamma}_s
	+ \left(\theta\Lambda\Lambda^\top - \frac{\theta^2}{\theta +1}\Lambda\sigma^\top(\sigma\sigma^\top)^{-1}\sigma\Lambda^\top\right)\Gamma_s^2 \\
	&\quad + 2\left(B - \frac{\theta}{\theta +1}A\Lambda\sigma^\top(\sigma\sigma^\top)^{-1}\right)\Gamma_s- \frac{1}{\theta +1}A^2(\sigma\sigma^\top)^{-1} = 0, \\
	& \Gamma_T = 0,
\end{aligned}
\right.
\end{equation}
and $\phi_\cdot$ is the solution to the following equation
\begin{equation}\label{ODE}
\left\{
\begin{aligned}
	&\dot{\phi}_s + \Biggl[B - \frac{\theta}{\theta +1}A\Lambda\sigma^\top(\sigma\sigma^\top)^{-1}
	+ \left(\theta\Lambda\Lambda^\top - \frac{\theta^2}{\theta +1}\Lambda\sigma^\top(\sigma\sigma^\top)^{-1}\sigma\Lambda^\top\right)\Gamma_s\Biggr]\phi_s \\
	&\quad + b\Gamma_s - \left[\frac{1}{\theta +1}A(\sigma\sigma^\top)^{-1} + \frac{\theta}{\theta +1}\Lambda\sigma^\top(\sigma\sigma^\top)^{-1}\Gamma_s\right](a - r_s) = 0, \\
	&\phi_T = 0.
\end{aligned}
\right.
\end{equation}
Finally, by \eqref{optimal portfolio} and \eqref{relation p and X}, we can get the optimal portfolio in the following state feedback form:
\begin{equation}\label{optimal portfolio state feedback form}\hspace{-1mm}
	\bar{u}_s=\frac{1}{\theta+1}(\sigma\sigma^\top)^{-1} \left[ \big(\theta\Lambda\sigma^\top\Gamma_{s} + A\big)\bar{X}_s^{t,x;\bar{u}}
	+ \theta\Lambda\sigma^\top\phi_s + a - r_s\right],\quad \text{a.e.} s \in [t,T],\quad \hat{\mathbf{P}}\text{-a.s.},
\end{equation}
where $\Gamma_\cdot, \phi_\cdot$ are given by \eqref{Riccati} and \eqref{ODE}, respectively.

To summarize, we have the following result.
\begin{mythm}\label{MP example}
If equations \eqref{Riccati} and \eqref{ODE} admit solutions $\Gamma_\cdot$ and $\phi_\cdot$, then the optimal control $\bar{u}_{\cdot}$ of our risk-sensitive portfolio optimization problem \eqref{wealth new 1}-\eqref{value function example} has the following state feedback form
\begin{equation}
	\bar{u}(s,\bar{X})=\frac{1}{\theta+1}(\sigma\sigma^\top)^{-1} \left[ \big(\theta\Lambda\sigma^\top\Gamma_{s} + A\big)\bar{X}+ \theta\Lambda\sigma^\top\phi_s + a - r_s\right],
\end{equation}
for $\text{a.e.} s \in [0,T]$, $\hat{\mathbf{P}}\text{-a.s.}$, where $\Gamma_\cdot, \phi_\cdot$ are given by \eqref{Riccati} and \eqref{ODE}, respectively.
\end{mythm}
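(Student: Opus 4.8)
The plan is to treat Theorem \ref{MP example} as a direct application of the risk-sensitive maximum principle (Lemma \ref{risk-sensitive MP}) to the portfolio problem \eqref{wealth new 1}--\eqref{value function example}, followed by an explicit decoupling of the resulting Hamiltonian system. First I would note that the affine-linear data of the factor model satisfy (H1)--(H3), so that Lemma \ref{risk-sensitive MP} applies: along a candidate optimal pair $(\bar u_\cdot,\bar X_\cdot^{t,x;\bar u})$ there exist adjoint processes $(\bar p_\cdot,\bar q_\cdot)$ and $(\bar P_\cdot,\bar Q_\cdot)$ solving \eqref{first-order adjoint equation example} and \eqref{second-order adjoint equation example}, and the minimum condition \eqref{maximum condition-mathcal H} holds for the $\mathcal H$-function computed in \eqref{mathcal Hamiltonian example 1}. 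Since the maximum principle is a necessary condition, this step identifies \emph{any} optimal control, which is what the theorem asserts.

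Next I would carry out the pointwise minimization $u\mapsto\mathcal H(s,\bar X_s^{t,x;\bar u},u)$. The coefficient of $u^2$ in \eqref{mathcal Hamiltonian example 1} is $\tfrac12(\theta+1)\sigma\sigma^\top>0$, so the map is strictly convex in $u$ and the first-order condition $\partial\mathcal H/\partial u=0$ is both necessary and sufficient for the minimizer; solving it yields the candidate feedback \eqref{optimal portfolio}, expressed through $\bar X_s^{t,x;\bar u}$ and $\bar p_s$. Because this expression couples state and adjoint, the pair $(\bar X_\cdot^{t,x;\bar u},\bar p_\cdot)$ must be found by solving the fully coupled forward--backward system \eqref{FBSDE example}.

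The central step is to decouple \eqref{FBSDE example} via the linear ansatz \eqref{relation p and X}, $\bar p_s=\Gamma_s\bar X_s^{t,x;\bar u}+\phi_s$ with $\Gamma_T=\phi_T=0$. Applying It\^o's formula to this ansatz produces \eqref{Ito formula example}, while substituting \eqref{relation p and X} into the backward equation of \eqref{FBSDE example} produces \eqref{substituting}; equating the $d\hat W_s$ terms forces $\bar q_s=\Gamma_s\Lambda$, and then equating the drifts and separating the part proportional to $\bar X_s^{t,x;\bar u}$ from the $\bar X$-free part yields, respectively, the Riccati equation \eqref{Riccati} for $\Gamma_\cdot$ and the linear ODE \eqref{ODE} for $\phi_\cdot$, once one also inserts the feedback \eqref{optimal portfolio} for $\bar u_s$. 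I expect this coefficient-matching to be the main obstacle: one must verify that substituting \eqref{optimal portfolio} (which itself contains $\bar p_s=\Gamma_s\bar X_s+\phi_s$) into both the forward drift and the backward drift produces only terms that are affine in $\bar X_s^{t,x;\bar u}$, with the quadratic-in-$\Gamma$ contributions arising precisely from the $\theta\Lambda\sigma^\top u_s$ coupling in the drift of $X$ and the $\theta\Lambda\bar q_s^\top\bar p_s$ term in \eqref{first-order adjoint equation example}, so that the system closes into \eqref{Riccati}--\eqref{ODE} without over-determination.

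Finally, granted that \eqref{Riccati} and \eqref{ODE} admit solutions $\Gamma_\cdot,\phi_\cdot$ (the standing hypothesis of the theorem), the ansatz \eqref{relation p and X} furnishes a solution of \eqref{FBSDE example}, and substituting $\bar p_s=\Gamma_s\bar X_s^{t,x;\bar u}+\phi_s$ and $\bar q_s=\Gamma_s\Lambda$ back into \eqref{optimal portfolio} collects the $\bar X$-coefficient and the constant term into the claimed state feedback \eqref{optimal portfolio state feedback form}, which is exactly the assertion of the theorem.
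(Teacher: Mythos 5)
Your proposal follows essentially the same route as the paper: apply the risk-sensitive maximum principle of Lemma \ref{risk-sensitive MP}, minimize the strictly convex $\mathcal H$-function in $u$ to obtain \eqref{optimal portfolio}, decouple the Hamiltonian system \eqref{FBSDE example} with the affine ansatz $\bar p_s=\Gamma_s\bar X_s^{t,x;\bar u}+\phi_s$, and match coefficients to arrive at \eqref{Riccati}, \eqref{ODE} and the feedback law \eqref{optimal portfolio state feedback form}. The argument is correct and matches the paper's derivation step for step, with your explicit remarks on the convexity of $\mathcal H$ in $u$ and on the necessity of verifying (H1)--(H3) being harmless elaborations.
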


\subsection{DPP approach}

In this case, the value function $V(\cdot,\cdot)$ should satisfy the following HJB equation:
\begin{equation}\label{HJB equation example}
\left\{
\begin{aligned}
	&-\frac{\partial V}{\partial t}(t,x) = \inf\limits_{u \in U} G\left(t,x,u,\frac{\partial V}{\partial x}(t,x),\frac{\partial^2 V}{\partial x^2}(t,x)\right),\\
    &V(T,x)=-\log v,		
\end{aligned}
\right.
\end{equation}
where the generalized Hamiltonian function \eqref{generalized Hamiltonian} is
\begin{equation}\label{generalized Hamiltonian example}
\begin{aligned}
	&G\left(t,x,u,\frac{\partial V}{\partial x}(t,x),\frac{\partial^2 V}{\partial x^2}(t,x)\right)\\
    &= \frac{1}{2}(\theta + 1)\sigma\sigma^\top u^2 - r_t - (a + Ax-r_t)u + \frac{\partial V}{\partial x}(t,x)\big(b+Bx-\theta\Lambda\sigma^\top u\big)  \\
	&\quad+ \frac{\theta}{2}\Lambda\Lambda^\top \frac{\partial V}{\partial x}(t,x)^2+ \frac{1}{2}\Lambda\Lambda^\top \frac{\partial^2 V}{\partial x^2}(t,x).
\end{aligned}	
\end{equation}
We conjecture that $V(t,x)$ is quadratic in $x$, namely,
\begin{equation}\label{conjecture of V(t,x)}
	V(t,x) = \frac{1}{2}\Psi_tx^2 + \eta_tx +k_t,
\end{equation}
for some deterministic differentiable functions $\Psi_\cdot,\eta_\cdot$ and $k_\cdot$ with $\Psi_T= 0,\,\eta_T=0,\,k_T=-\log v$.

Substituting \eqref{conjecture of V(t,x)} into \eqref{generalized Hamiltonian example} and using completion of squares, we get
\begin{equation}
\begin{aligned}
	&G\left(t,x,u,\frac{\partial V}{\partial x}(t,x),\frac{\partial^2 V}{\partial x^2}(t,x)\right) \\
	&= \frac{1}{2}(\theta + 1)\sigma\sigma^\top \bigg\{ u - \frac{1}{\theta+1}(\sigma\sigma^\top)^{-1} \Big[ \big(\theta\Lambda\sigma^\top\Psi_t + A\big)x+ \theta\Lambda\sigma^\top\eta_t + a - r_t \Big] \bigg\}^2 \\
	&\quad + \Bigg\{\frac{1}{2}\left[\theta\Lambda\Lambda^\top - \frac{\theta^2}{\theta +1}\Lambda\sigma^\top(\sigma\sigma^\top)^{-1}\sigma\Lambda^\top\right]\Psi_t^2
	+ \left(B - \frac{\theta}{\theta +1}A\Lambda\sigma^\top(\sigma\sigma^\top)^{-1}\right)\Psi_t \\
	&\qquad - \frac{1}{2(\theta +1)}A^2(\sigma\sigma^\top)^{-1}\Bigg\}x^2 \\
	&\quad + \Bigg\{\left[B - \frac{\theta}{\theta +1}A\Lambda\sigma^\top(\sigma\sigma^\top)^{-1}
	+ \left(\theta\Lambda\Lambda^\top - \frac{\theta^2}{\theta +1}\Lambda\sigma^\top(\sigma\sigma^\top)^{-1}\sigma\Lambda^\top\right)\Psi_t\right]\eta_t \\
	&\qquad + b\Psi_t - \left(\frac{1}{\theta +1}A(\sigma\sigma^\top)^{-1} + \frac{\theta}{\theta +1}\Lambda\sigma^\top(\sigma\sigma^\top)^{-1}\Psi_t\right)(a - r_s) \Bigg\}x \\
	&\quad -\frac{1}{2(\theta +1)}(\sigma\sigma^\top)^{-1}\left(a - r_t + \theta\eta_t\Lambda\sigma^\top\right)^2- r_t + b\eta_t + \frac{\theta}{2}\Lambda\Lambda^\top\eta_t^2 + \frac{1}{2}\Lambda\Lambda^\top\Psi_t,
\end{aligned}
\end{equation}
provided that $\sigma\sigma^\top>0$ and $\theta>0$. Then we see that the optimal state feedback portfolio is given by
\begin{equation}\label{optimal portfolio state feedback form-DPP}\hspace{-1mm}
	\bar{u}(t,\bar{X})=\frac{1}{\theta+1}(\sigma\sigma^\top)^{-1} \left[ \big(\theta\Lambda\sigma^\top\Psi_t + A\big)\bar{X}+ \theta\Lambda\sigma^\top\eta_t + a - r_t\right],\quad \text{a.e.} s \in [t,T],\quad \hat{\mathbf{P}}\text{-a.s.}
\end{equation}
In addition, noting \eqref{optimal portfolio state feedback form-DPP}, the HJB equation \eqref{HJB equation example} now reads
\begin{equation}\label{HJB equation example 1}
\begin{aligned}
	&-\frac{1}{2}\dot{\Psi}_{t}x^2 - \dot{\eta}_{t}x - \dot{k}_{t} = \Bigg\{\frac{1}{2}\left[\theta\Lambda\Lambda^\top - \frac{\theta^2}{\theta +1}\Lambda\sigma^\top(\sigma\sigma^\top)^{-1}\sigma\Lambda^\top\right]\Psi_t^2\\
	&\quad + \left(B - \frac{\theta}{\theta +1}A\Lambda\sigma^\top(\sigma\sigma^\top)^{-1}\right)\Psi_t - \frac{1}{2(\theta +1)}A^2(\sigma\sigma^\top)^{-1}\Bigg\}x^2 \\
	&\quad + \Bigg\{\left[B - \frac{\theta}{\theta +1}A\Lambda\sigma^\top(\sigma\sigma^\top)^{-1}
	+ \left(\theta\Lambda\Lambda^\top - \frac{\theta^2}{\theta +1}\Lambda\sigma^\top(\sigma\sigma^\top)^{-1}\sigma\Lambda^\top\right)\Psi_t\right]\eta_t \\
	&\qquad + b\Psi_t - \left(\frac{1}{\theta +1}A(\sigma\sigma^\top)^{-1} + \frac{\theta}{\theta +1}\Lambda\sigma^\top(\sigma\sigma^\top)^{-1}\Psi_t\right)(a - r_s) \Bigg\}x \\
	&\quad -\frac{1}{2(\theta +1)}(\sigma\sigma^\top)^{-1}\left(a - r_t + \theta\eta_t\Lambda\sigma^\top\right)^2- r_t + b\eta_t + \frac{\theta}{2}\Lambda\Lambda^\top\eta_t^2 + \frac{1}{2}\Lambda\Lambda^\top\Psi_t.
\end{aligned}
\end{equation}
By comparing the quadratic terms and linear terms in $x$, we recover the equation \eqref{Riccati}, \eqref{ODE}, respectively. That is to say, we have that $\Psi_\cdot$ coincides with $\Gamma_\cdot$ and $\eta_\cdot$ coincides with $\phi_\cdot$. Moreover, we have
\begin{equation}\label{ODE-DPP}
\left\{
\begin{aligned}
	&\dot{k}_t-\frac{1}{2(\theta +1)}(\sigma\sigma^\top)^{-1}\left(a - r_t + \theta\eta_t\Lambda\sigma^\top\right)^2- r_t + b\eta_t + \frac{\theta}{2}\Lambda\Lambda^\top\eta_t^2 + \frac{1}{2}\Lambda\Lambda^\top\Psi_t = 0,\\
	&k_T=-\log v.
\end{aligned}
\right.
\end{equation}

We have proved the following result.
\begin{mythm}\label{DPP example}
If equations \eqref{Riccati}, \eqref{ODE} and \eqref{ODE-DPP} admit solutions $\Gamma_\cdot$, $\phi_\cdot$ and $k_\cdot$, then the optimal control of our risk-sensitive portfolio optimization problem \eqref{wealth new 1}-\eqref{value function example} has the state feedback form
\begin{equation}
	\bar{u}(t,\bar{X})=\frac{1}{\theta+1}(\sigma\sigma^\top)^{-1} \left[ \big(\theta\Lambda\sigma^\top\Gamma_t + A\big)\bar{X}
	+ \theta\Lambda\sigma^{\top}\phi_{t} + a - r_t\right],
\end{equation}
for $\text{a.e.} s \in [0,T]$, $\hat{\mathbf{P}}\text{-a.s.}$, and the value function is given by \eqref{value function example}, where $\Gamma_\cdot$, $\phi_\cdot$ and $k_\cdot$ are given by \eqref{Riccati}, \eqref{ODE} and \eqref{ODE-DPP}, respectively.
\end{mythm}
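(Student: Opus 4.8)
The plan is to read this as a verification theorem resting on the completion-of-squares computation already carried out. Assuming \eqref{Riccati}, \eqref{ODE} and \eqref{ODE-DPP} admit solutions $\Gamma_\cdot,\phi_\cdot,k_\cdot$ on $[0,T]$, I would first define the candidate
\[
	V(t,x):=\tfrac12\Gamma_t x^2+\phi_t x+k_t ,
\]
which is polynomial in $x$ and $C^1$ in $t$, hence $C^{1,2}([0,T]\times\mathbb{R})$. The derivation leading to \eqref{HJB equation example 1}, read backwards, shows that matching the coefficients of $x^2$, $x$ and the constant term in \eqref{generalized Hamiltonian example} forces exactly \eqref{Riccati}, \eqref{ODE} and \eqref{ODE-DPP}; therefore this $V$ is a classical solution of the HJB equation \eqref{HJB equation example}. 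Since $\sigma\sigma^\top>0$ and $\theta>0$ make $u\mapsto G(t,x,u,\partial_x V,\partial_{xx}V)$ strictly convex, the infimum is attained at a unique minimizer, namely the affine feedback map \eqref{optimal portfolio state feedback form-DPP}.

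First I would establish well-posedness and admissibility of the closed loop. Because the feedback $\bar u(s,x)$ is affine in $x$ with bounded continuous coefficients, inserting it into \eqref{wealth new 1} gives a linear SDE, which has a unique strong solution $\bar X_\cdot^{t,x;\bar u}$; setting $\bar u_\cdot:=\bar u(\cdot,\bar X_\cdot^{t,x;\bar u})$ one checks $\bar u_\cdot\in\mathcal{V}[t,T]$. Then I would run the classical verification argument: applying It\^o's formula to $V(s,X_s^{t,x;u})$ along an arbitrary admissible $u_\cdot$ and using that $V$ solves \eqref{HJB equation example} yields the pointwise inequality $-\partial_s V\le G(s,X_s,u_s,\partial_x V,\partial_{xx}V)$, with equality under $\bar u_\cdot$; integrating and taking the appropriate expectation should give $J(t,x;u_\cdot)\ge V(t,x)=J(t,x;\bar u_\cdot)$, identifying $V$ with \eqref{value function example} and $\bar u_\cdot$ as optimal.

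The hard part will be making the last step rigorous in the genuinely risk-sensitive regime, since the cost \eqref{auxiliary objective functional} is an exponential-of-integral and the plain It\^o-plus-expectation step does not directly produce the comparison $J\ge V$. I would circumvent this through the exponential transform $\widehat V:=\exp(\theta V)$, which absorbs the quadratic term $\tfrac{\theta}{2}\Lambda\Lambda^\top(\partial_x V)^2$ and turns the HJB into a linear second-order equation, and then invoke the generalized risk-sensitive DPP and uniqueness of Moon \cite{M21} (already used in Lemma \ref{V is solution to HJB equation}) to conclude that the smooth solution we built is the value function. A secondary technical point is that the reformulation \eqref{portfolio optimization} rests on a change of measure, so I must verify that the density $Z_\cdot$ attached to $\bar u_\cdot$ is a true martingale, i.e. $\mathbb{E}[Z_t]=1$; this follows from a Novikov-type estimate, exploiting that under the affine feedback $\bar X$ is a Gaussian process and therefore has the finite exponential moments required for $\bar u_\cdot\in\mathcal{V}[t,T]$ and for the verification to close.
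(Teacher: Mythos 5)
Your computational core coincides with the paper's: the quadratic ansatz \eqref{conjecture of V(t,x)}, substitution into \eqref{generalized Hamiltonian example}, completion of squares in $u$ (valid since $\theta>0$ and $\sigma\sigma^\top>0$ make the $u^2$-coefficient positive), and matching of the $x^2$, $x$ and constant terms in \eqref{HJB equation example 1} to recover \eqref{Riccati}, \eqref{ODE}, \eqref{ODE-DPP} and the feedback law \eqref{optimal portfolio state feedback form-DPP}. Where you genuinely diverge is that the paper stops there: its ``proof'' of Theorem \ref{DPP example} is exactly this formal derivation, after which it declares the result proved, implicitly leaning on Lemma \ref{V is solution to HJB equation} and the DPP/uniqueness theory of \cite{M21} to identify the constructed $V$ with the value function \eqref{value function example}. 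You instead supply the verification layer the paper omits: well-posedness and admissibility of the closed-loop affine feedback, the observation that a naive It\^o-plus-expectation comparison does not directly yield $J\ge V$ for the exponential-of-integral cost \eqref{auxiliary objective functional}, the exponential transform $\widehat V=\exp(\theta V)$ that linearizes the HJB and lets Feynman--Kac (or Moon's uniqueness) close the argument, and the check that the Girsanov density attached to $\bar u_\cdot$ is a true martingale so that $\bar u_\cdot\in\mathcal{V}[0,T]$ as required by \eqref{portfolio optimization}. These additions are not redundant --- they are precisely the gaps a referee would flag in the paper's own treatment --- though your Novikov step deserves one more sentence of care: exponential moments of quadratic Gaussian functionals are finite only for small enough exponents, so one should localize (split $[0,T]$ into short subintervals, or invoke a Bene\v{s}-type criterion) rather than apply Novikov globally. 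With that caveat, your route proves strictly more than the paper's and is the more defensible argument.
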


\subsection{Relationship between MP and DPP}

We now verify the relationship in Theorem \ref{relation of MP and DPP}. In fact, relationship \eqref{relation time variable} is obvious from \eqref{HJB equation example}. Moreover, \eqref{relarion p q and X example} and \eqref{conjecture of V(t,x)} are exactly the relationships given in \eqref{relation state variable}.

Let us verify the relationship \eqref{relation inequality}. We consider the second-order adjoint equation \eqref{second-order adjoint equation example}, it is easy to show that
$(\bar{P}_s,\bar{Q}_s) = (\rho_s,0)$, $s\in[0,T]$, where $\rho_\cdot$ is the solution to (noting \eqref{relarion p q and X example}):
\begin{equation}\label{rho}
\left\{
\begin{aligned}
	\dot{\rho}_s&=-2B\rho_s - \theta\Lambda\Lambda^\top \Gamma_s^2,\\
	\rho_T&=0.
\end{aligned}
\right.
\end{equation}
From \eqref{conjecture of V(t,x)}, we have $\frac{\partial^2 V}{\partial x^2}(s,\bar{X}_s^{t,x;\bar{u}}) = \Gamma_s$, where $\Gamma_\cdot$ satisfies
\begin{equation}\label{Gamma}
\left\{
\begin{aligned}
	\dot{\Gamma}_s&= - \left(\theta\Lambda\Lambda^\top - \frac{\theta^2}{\theta +1}\Lambda\sigma^\top(\sigma\sigma^\top)^{-1}\sigma\Lambda^\top\right)\Gamma_s^2 \\
	&\quad  - 2\left(B - \frac{\theta}{\theta +1}A\Lambda\sigma^\top(\sigma\sigma^\top)^{-1}\right)\Gamma_s+ \frac{1}{\theta +1}A^2(\sigma\sigma^\top)^{-1}, \\
	\Gamma_T& = 0.
\end{aligned}
\right.
\end{equation}
Furthermore, using completing of squares again, we get
\begin{equation}\label{Gamma 1}
\left\{
\begin{aligned}
	\dot{\Gamma}_s&= -2B\Gamma_s- \theta\Lambda\Lambda^\top\Gamma_s^2+ \frac{1}{\theta +1}(\sigma\sigma^\top)^{-1}\left(\theta\Lambda\sigma^\top\Gamma_s + A\right)^2 ,\\
	\Gamma_T& = 0.
\end{aligned}
\right.
\end{equation}
Noting
\begin{equation*}
	 \frac{1}{\theta +1}(\sigma\sigma^\top)^{-1}\left(\theta\Lambda\sigma^\top\Gamma_s + A\right)^2 \geq 0,
\end{equation*}
If $\frac{1}{\theta +1}(\sigma\sigma^\top)^{-1}\left(\theta\Lambda\sigma^\top\Gamma_s + A\right)^2 = 0$, then the uniqueness of the solutions to ODE guarantees that $\Gamma_s=\rho_s$, namely,
$\frac{\partial^2 V}{\partial x^2}(s,\bar{X}_s^{t,x;\bar{u}})=\bar{P}_s$. Otherwise, if $\frac{1}{\theta +1}(\sigma\sigma^\top)^{-1}\left(\theta\Lambda\sigma^\top\Gamma_s + A\right)^2 > 0$, then the comparison theorem of ODE guarantees that $\Gamma_s<\rho_s$, namely, $\frac{\partial^2 V}{\partial x^2}(s,\bar{X}_s^{t,x;\bar{u}}) < \bar{P}_s$. Thus the relationship \eqref{relation inequality} holds. And we can see the strict inequality in \eqref{relation inequality} holds in some cases.

\begin{Remark}\label{Riccati solvalbility}
If $\theta > 0$, then $\textit{I} -  \frac{\theta}{\theta +1}\sigma^\top(\sigma\sigma^\top)^{-1}\sigma$ is positive definite, then Riccati equation \eqref{Riccati} admits an explicit solution. Thus equation \eqref{ODE} admits an explicit solution (see Corollary 4.1 in Shi and Wu \cite{SW12} for details).
\end{Remark}

\section{Concluding remarks}

In this article, we have considered the relationship between MP and DPP for risk-sensitive stochastic optimal control problems. When the value function is smooth, we obtained the connection among the adjoint processes, the generalized Hamiltonian function, and the value function. As an application, a linear-quadratic risk-sensitive portfolio optimization problem in the financial market was discussed. After the state feedback optimal control was obtained by the risk-sensitive MP and the DPP respectively, the relations we obtained are illustrated explicitly.

A challenging problem is the relationship between MP and DPP without the the smooth assumption imposed on the value function. This problem may be solved in the framework of viscosity solutions (Yong and Zhou \cite{YZ99}, Nie et al. \cite{NSW17}). We will study this topic in the future.

\end{document}